\definecolor{ForestGreen}{rgb}{0.15,0.416,0.18}
\definecolor{EgyptBlue}{rgb}{0.063,0.2,0.65}
\title{Radial Solutions and a Local Bifurcation Result  for a Singular Elliptic Problem with Neumann Condition}
\author{
  \qquad
  Claudio Saccon
   \footnote{Dipartimento di Matematica,
     Largo Pontecorvo 5, I56127 Pisa, ITALY,
     e-mail: claudio.saccon@unipi.it}
  }
\date{}
\newcommand{\real}{\ensuremath{\mathbb{R}}}
\newcommand{\nat}{\ensuremath{\mathbb{N}}}
\newcommand{\CC}{\mathcal{C}}
\newcommand{\SSS}{\mathcal{S}}
\newcommand{\X}[1][{}]{\ensuremath{\mathbb{X}}}
\newcommand{\ph}{\varphi{}}
\newcommand{\eps}{\varepsilon{}}
\newcommand{\sm}{{\sigma}}
\newcommand{\lm}{{\lambda}}
\newcommand{\set}[1]{\ensuremath{ \left\{ #1 \right\} }}
\newcommand{\st}{{\,:\,}}
\newcommand{\scal}[2]{\ensuremath{\left\langle#1,#2\right\rangle}}
\newcommand{\norm}[1]{\ensuremath{\left\lVert #1 \right\rVert}}
\newcommand{\uno}{\mathbb{1}}
\newcommand{\Hone}{\ensuremath{W^{1,2}_0(\Omega)}}
\newcommand{\Hon}{\ensuremath{W^{1,2}(\Omega)}}
\newcommand{\Ltwo}{\ensuremath{L^2(\Omega)}}
\newcommand{\Linfty}{\ensuremath{L^\infty(\Omega)}}
\newcommand{\Lpi}[1][p]{\ensuremath{L^{#1}(\Omega)}}
\theoremstyle{plain}
\newtheorem{thm}{Theorem}[section]
\newtheorem{lma}[thm]{Lemma}
\theoremstyle{definition}
\theoremstyle{remark}
\newtheorem{rmk}[thm]{Remark}
\DeclareMathOperator{\spanned}{span}
\newcommand{\hh}{\ensuremath{\mathcal{H}}}
\newcommand{\HHH}{\ensuremath{\mathbb{H}}}
\newcommand{\LLL}{\ensuremath{\mathbb{L}}}
\newcommand{\doi}[1]{\url{https://doi.org/#1}}
\newcommand{\ZB}[1]{\href{https://zbmath.org/?q=an:#1}{Zbl~#1}}
\begin{document}
 \maketitle

\begin{abstract}
 We study the problem $-\Delta u=\lm u-u^{-1}$ with a Neumann boundary condition; the peculiarity being the
 presence of the singular term $-u^{-1}$. We point out that the minus sign in front of the negative power of $u$ is particularly challenging, since no convexity argument can be invoked. Using bifurcation techniques we are able to prove the existence of solution $(u_\lm,\lm)$ with $u_\lm$ approaching the trivial constant solution $u=\lm^{-1/2}$ and $\lm$ close to an eigenvalue of a suitable linearized problem. To achieve this we also need to prove a generalization of a classical two-branch bifurcation result for potential operators. Next  we study the radial case and show that in this case one of the bifurcation branches is global and we find the asymptotical behavior of such a branch. This results allows to derive the existence of multiple solutions
 $u$ with $\lm$ fixed.
\end{abstract}

{\bf MSC:}\quad
34B16,35J20,58E07

{\bf Keywords:}\quad
Singular elliptic equation; Positive solutions; Radial solutions; Variational bifurcation; Two branches; Eigenvalue problem. 

%
\section{Introduction}

In the last decades several authors have studied semilinear elliptic 
problems with singular nonlinear term (with respect to the unknown function $u$). The model problem is the following:
\begin{equation}\label{eqn:singular-problems-gen}
 \begin{cases}
  -\Delta u= \gamma u^{-q}+f(x,u)&\mbox{in }\Omega,
  \\
  u>0&\mbox{in }\Omega,
  \\
  u=0&\mbox{on }\partial\Omega,
 \end{cases}\end{equation}
where $q>0$ $\gamma\neq0$ and $f$ is a non linear term with standard 
growth conditions. Existence and multiplicity of solutions to problem \eqref{eqn:singular-problems-gen} are usually investigated in terms of
the behavior of $f$ and the sign of $\gamma$.

A main aspect to be taken into account is the variational nature of
\eqref{eqn:singular-problems-gen}: formally speaking solutions $u$ of
\eqref{eqn:singular-problems-gen} are expected to be 
critical points of the functional:
\begin{equation*}
 I(u):=\frac12\int_\Omega|\nabla u|^2\,dx-
 \frac{\gamma}{1-q}\int_\Omega u^{1-q}\,dx-
 \int_\Omega F(x,u)\,dx,
\end{equation*}
defined on $\Hone$ and
restricted to $\set{u\ge0}$,
where $F(x,s)$ is a primitive in $s$ of $f(x,s)$ 
(if $q=1$ a logarithm should be introduced). 
Unfortunately the presence of the singular term
makes it problematic to give a rigorous formulation of the above ideas.

The majority of the known results concern the case $\gamma>0$, where
the term $u\mapsto-\dfrac{\gamma}{1-q}u^{1-q}$ is convex 
in the interval $]0,+\infty[$. 
This fact helps a lot, whether one tries to directly deal with  $I$
(by using some nonsmooth-critical-point theory) or to use an approximation scheme (by a sequence $I_n\to I$, $I_n$
being $\mathcal{C}^1$ on $\Hone$). For instance, if $f=0$, the problem has a unique weak solution $\bar u$ in $\Hone$ when $0<q<3$ and the solution is a minimizer for $I$. This result can be extended for all $q>0$ dropping the request that $\bar u\in\Hone$ (see \cite{CaninoDegiovanni2004singularvariational,CranRabTar1977}). For a small non exaustive list of multiplicity results for solutions of this kind of 
problems see
\cite{AdimurthiGiacomoniSantra2018,BoccardoOrsina2010SemilinearEE,CoclitePalmieri1989,DhanyaEunkyungShivaji2015,GherguRadulescu2003,GherguRadulescu2008BOOK,GiacomoniMukherjeeSreenadh2019,Haitao2003,HiranoSacconShioji2008,Saccon2014SingularNonsymmetric,ShiYao1998} (and the references therein).

If we turn to  $\gamma<0$ the literature is scarcer: to the author's knowledge the main results are contanined in \cite{ChoiLazerMcKenna1998,DiazMorelOswald1987,MontSilva2012,ShiYao1998,Zhang1995}. In this situation solutions are ``attracted'' from the value  zero and tend to develop ``dead cores'', so the formulation \eqref{eqn:singular-problems-gen} needs to be modified in order to admit non strictly positive solutions. For instance  the only solution for the case $f=0$ is $u=0$ (as one can easily see by multiplying the equation by $u$). Moreover a direct variational approach using the functional $I$ seems difficult for the  moment and the usual approach goes by perturbation methods. 

We have found particularly interesting the paper \cite{MontSilva2012} by Montenegro and Silva, where the authors use perturbation methods and  show that there exist two nontrivial solutions when $\gamma=-1$, 
$0<q<1$,  $f(x,u)=\mu u^p$, with $q<p<$ and $\mu>0$ big enough.
If we pass to $q=1$, simple tests in the radial case suggest that the Dirichlet problem only has the trivial solution. As we said  before solutions starting from zero are ``forced to stick'' at zero and not allowed to ``emerge'' (in contrast with the case of $q<1$). On this respect see Remark \eqref{eqn:rmk-no-dirichlet-radial-soln}.

For this reasons, in the case $q=1$,  we are lead to replace  the Dirichelet condition with a Neumann one. In particular we have considered the
problem:

  \begin{equation}\label{eqn:singular-neumann-problem}
  \begin{cases}
  -\Delta u=\lm u-\dfrac{1}{u}&\mbox{ in }\Omega,
   \\
   u>0&\mbox{ in }\Omega,
   \\
   \nabla u\cdot\nu=0&\mbox{ on }\partial\Omega
   \end{cases}
 \end{equation}

where $\Omega$ is a bounded smooth open subset of $\real^N$ and $\nu$ denotes the unit normal defined on $\partial\Omega$. This corresponds to
the problem of \cite{MontSilva2012} with $q=p=1$
(with Neumann condition).

In case $N=1$ \eqref{eqn:singular-neumann-problem} is closely related to a problem studied  by Del Pino, Man\'asevich,  and Montero in 1992
(see \cite{DelPinoManaMonte1992}) who deal with an ODE, 
in the periodic case, with a  more general, non autonomous, singular term $f(u,x)$ (singular in $u$ and $T$-periodic in $x$).
Using topological degree arguments they 
prove for instance that the equation:
\begin{equation*}
 -\ddot u=\lm u-\frac{1}{u^\alpha}
 \qquad,\qquad
 u(x)>0
 \quad,\qquad
 u(x+T)=u(x),
\end{equation*}
where $\alpha\geq1$, has a solution provided 
$\lm\neq\dfrac{\mu_k}{4}$ for all $k$. Here $\mu_k$ denote the eigenvalues of a suitable linearized problem which arises in a natural way from the problem. In this case, which has a variational structure,  the results of 
\cite{DelPinoManaMonte1992} can be derived from the existence of two global bifurcation branches which originate from 
trivial solutions of the linearized problem.

In this paper we present two types of results concerning problem
\eqref{eqn:singular-neumann-problem}. In Theorem \eqref{thm-local-bufurcation} of section 2 we prove the existence of two local bifurcation branches 
$(u_{1,\rho},\lm_{1,\rho})$ and $(u_{2,\rho},\lm_{2,\rho})$ of
solutions of \eqref{eqn:singular-neumann-problem}, such that 
$(u_{i,\rho},\lm_{1,\rho})\to(\hat u,\hat\lm)$, as $\rho\to 0$,
where $\hat\lm/2$ is an eigenvalue of 
$-\Delta$ with Neumann condition and $\hat u$ is the constant function: $\hat u\equiv\hat\lm^{-1/2}$. 
The proof of \eqref{thm-local-bufurcation} heavily relies
on a variant of the
well known abstract results on the existence of two bifurcation branches in the variational case (see \cite{Marino1973Bifur,Boehme1972,JiaquanLiuBifur,RabinowitzBifurPotOp77}). 
To the author surprise such a 
variant (see Theorem \eqref{thm:abstract-bifurcation-theorem} seems not to be present in the literature so its proof is carried on in section 3. It has to be said that proving
\eqref{thm:abstract-bifurcation-theorem} requires some additional
technicalities compared to the standard version. Indeed in \cite{Marino1973Bifur,Boehme1972}) the proof goes by studying a suitable perturbed function $f_\rho$ on the unit sphere $S$, while in our case $S$ has to be replaced by a sphere-like set $S_\rho$ also varying with $\rho$. This requires to construct suitable projections to
show that all $S_\rho$'s are omeomorphic to $S_0=S$ (for $\rho$ small). Apart from this the proof of \eqref{thm:abstract-bifurcation-theorem} follows the ideas of \cite{BenCanBifur}.

In section 4 we study the radial case in dimension $N=2$ (the same
could be probably done for $N\geq3$) using ODE techniques and a continuation argument for the nodal regions of the solutions.
In this way, following the ideas of
\cite{Rabinowitz1971GlobalResults}, we are able to prove that one of the two branches $(u_\rho,\lm_\rho)$ is global and bounded in 
$\lm_\rho$. This is done by proving that nodal regions of $u_\rho$ cannot collapse along the branch and that $\lm_\rho\to\bar\lm$ 
as $\norm{u_\rho}\to+\infty$, where $\bar\rho$ is an eigenvalue
of another suitable linear problem. In this way -- in the radial case -- we can find a lower estimate in the number of solutions for 
a fixed $\lm$, by counting the number of branches that cross
$\lm$.

\section{A local bifurcation result for the singular problem}
Let $\Omega$ be a bounded  open subset of $\real^N$  with smooth boundary.

\begin{thm}\label{thm-local-bufurcation}
 Let $\hat\mu>0$ be an eigenvalue of the following Neumann problem:
 \begin{equation}\label{eqn:eigenvalue-neumann-problem}
  \begin{cases}
   -\Delta u=\mu u&\mbox{ in }\Omega,
   \\
   \nabla u\cdot\nu=0&\mbox{ on }\partial\Omega,
  \end{cases}
 \end{equation}
 ($\nu$ denotes the normal to $\partial\Omega$). 
 
 Then there exists $\rho_0>0$ such that for all $\rho\in]0,\rho_0[$
 there exist two distinct pairs $(u_{1,\rho},\lm_{1,\rho})$ and 
 $(u_{2,\rho},\lm_{2,\rho})$ such that, for $i=1,2$:
 \begin{equation*}
  (u_{i,\rho},\lm_{i,\rho})\mbox{ are solutions of \eqref{eqn:singular-neumann-problem}}
  \ ,\ 
  u_{i,\rho}\to\frac{1}{\sqrt{\mu/2}}\mbox{\quad ( in $\Hon$  ) }
  \ ,\ 
  \lm_{i,\rho}\xrightarrow{\rho\to0}\frac{\hat\mu}{2}.
 \end{equation*}
\end{thm}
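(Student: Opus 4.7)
My plan is to recognize the variational structure of \eqref{eqn:singular-neumann-problem} and to reduce the bifurcation analysis near the constant solution to the abstract two--branch result Theorem~\ref{thm:abstract-bifurcation-theorem}. I would first define, on a neighborhood $\mathcal{U}$ of the positive constants in $\Hon$, the energy
\[
 J_\lm(u) \;=\; \frac{1}{2}\int_\Omega |\nabla u|^2\,dx \;+\; \int_\Omega \log u\,dx \;-\; \frac{\lm}{2}\int_\Omega u^2\,dx.
\]
Since $(\log u)'=1/u$ and no boundary term survives the integration by parts, the critical points of $J_\lm$ on $\mathcal{U}$ are precisely the solutions of \eqref{eqn:singular-neumann-problem}. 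For every $\lm>0$ the constant $\hat u_\lm\equiv\lm^{-1/2}$ is such a critical point, which is the trivial curve we bifurcate from.

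A direct computation of the second variation at $\hat u_\lm$ gives
\[
 J''_\lm(\hat u_\lm)[\varphi,\varphi]\;=\;\int_\Omega|\nabla\varphi|^2\,dx\;-\;\Big(\lm+\tfrac{1}{\hat u_\lm^{\,2}}\Big)\int_\Omega\varphi^2\,dx\;=\;\int_\Omega|\nabla\varphi|^2\,dx\;-\;2\lm\int_\Omega\varphi^2\,dx,
\]
which is the quadratic form associated with $-\Delta-2\lm I$ under the Neumann boundary condition. Its kernel is nontrivial precisely when $2\lm$ is a Neumann eigenvalue, which identifies $\hat\lm:=\hat\mu/2$ as the candidate bifurcation value, with kernel equal to the $\hat\mu$-eigenspace of~\eqref{eqn:eigenvalue-neumann-problem}.

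Next I would translate via $u=\hat u_\lm+v$ and set $\Psi_\lm(v):=J_\lm(\hat u_\lm+v)-J_\lm(\hat u_\lm)$. Because $\hat u_\lm$ is a strictly positive constant, $\Psi_\lm$ is of class $C^2$ (indeed smoother) in a fixed $\Hon$-neighborhood of $0$, uniformly for $\lm$ near $\hat\lm$, and it expands as
\[
 \Psi_\lm(v)\;=\;\tfrac{1}{2}\int_\Omega|\nabla v|^2\,dx\;-\;\lm\int_\Omega v^2\,dx\;+\;R_\lm(v),\qquad R_\lm(v)=O(\|v\|_{\Hon}^{3}).
\]
This is exactly the setting of the abstract Theorem~\ref{thm:abstract-bifurcation-theorem}: a smoothly $\lm$-dependent quadratic form degenerating at $\lm=\hat\lm$, plus a higher-order remainder. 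The abstract theorem then delivers, for every small $\rho>0$, two geometrically distinct critical points $v_{1,\rho},v_{2,\rho}$ on a sphere-like constraint $S_\rho$, with associated Lagrange multipliers $\lm_{1,\rho},\lm_{2,\rho}$, such that $v_{i,\rho}\to 0$ in $\Hon$ and $\lm_{i,\rho}\to\hat\lm$ as $\rho\to 0$. Setting $u_{i,\rho}:=\hat u_{\lm_{i,\rho}}+v_{i,\rho}$ gives the two bifurcation branches of the statement.

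The main obstacle, and the reason the classical version of the two-branch theorem does not suffice, is that the trivial curve $\lm\mapsto\hat u_\lm$ moves with $\lm$: after the Lyapunov--Schmidt reduction the natural constraint manifold is not a fixed sphere in the kernel but a family $S_\rho$ that shifts and deforms with $\rho$. Building the projections that identify each $S_\rho$ homeomorphically with a fixed $S_0$ -- while preserving the symmetry structure needed to extract two distinct critical values -- is the delicate technical content that forces the strengthened abstract result proved in section~3. A secondary issue, easily settled for $\rho$ small, is that $u_{i,\rho}$ must remain uniformly positive so that $\log u_{i,\rho}$ is well defined and $\Psi_\lm$ is genuinely smooth at it; this follows from elliptic regularity applied to the equation $-\Delta u_{i,\rho}=\lm_{i,\rho}u_{i,\rho}-1/u_{i,\rho}$, which upgrades $v_{i,\rho}\to 0$ in $\Hon$ to convergence in $L^\infty$ and keeps $u_{i,\rho}$ close to the positive constant $\sqrt{2/\hat\mu}$.
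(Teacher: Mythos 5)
Your overall strategy matches the paper's in broad outline: translate along the constant branch $\hat u_\lm=\lm^{-1/2}$, observe that the linearization degenerates precisely when $2\lm$ is a Neumann eigenvalue, and invoke the abstract two-branch Theorem~\ref{thm:abstract-bifurcation-theorem}. However, your reduction does not actually put the problem in the form that Theorem~\ref{thm:abstract-bifurcation-theorem} handles, and this is a genuine gap rather than a stylistic difference.

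The abstract equation there is $Au+\nabla\hh(u)=\lm\,i^*\bigl(u+\nabla_\LLL\hh_1(u)\bigr)$ with $\hh$, $\hh_1$ \emph{independent of $\lm$}: the parameter $\lm$ enters only as the Lagrange multiplier for the constraint $g(u)=\tfrac12\norm{u}_\LLL^2+\hh_1(u)=\text{const}$. After your translation $u=\hat u_\lm+v$ the remainder $R_\lm(v)$ carries an explicit $\lm$-dependence (the higher-order terms of $\log(1+\sqrt{\lm}\,v)$ contain powers of $\sqrt{\lm}$), so the nonlinearity on the right-hand side is genuinely $\lm$-dependent and the abstract theorem does not apply. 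The paper removes this with a \emph{second} substitution $v:=\sqrt{\lm}\,z$, which pushes all of the $\lm$-dependence into the coefficient $2\lm$ in front of the quadratic part and leaves a $\lm$-free nonlinearity $h_1(s)=s^2/(1+s)$. That normalization is not cosmetic; without it Theorem~\ref{thm:abstract-bifurcation-theorem} cannot be invoked in the first place.

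Second, Theorem~\ref{thm:abstract-bifurcation-theorem} requires $\hh_1\in\CC^1(\LLL;\real)$, i.e.\ defined and $\CC^1$ on all of $L^2(\Omega)$. Your $R_\lm$ is singular wherever $1+\sqrt{\lm}\,v\le 0$, which is not excluded for a general $v\in\Hon$. You cannot postpone this to an a posteriori positivity check, because without a globally defined constraint functional there are no critical points to check. The paper truncates \emph{before} applying the abstract result: $\tilde h_1:=\eta\cdot h_1$ is compactly supported, so $\hh_1(v)=\tfrac12\int\tilde H_1(v)$ is $\CC^1$ on $L^2$; once the abstract theorem produces branches of the modified problem, the bootstrap estimate $\norm{v}_\infty\le K\mu\norm{v-\tfrac12\tilde h_1(v)}_2$ gives $\norm{v_{i,\rho}}_\infty\to 0$, so for $\rho$ small the truncated and original nonlinearities coincide along the branch. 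Your appeal to elliptic regularity for $-\Delta u=\lm u-1/u$ is not self-starting either: to bootstrap on that equation you must already know $u$ bounded away from $0$, whereas the bootstrap that works is on the cut-off equation, whose right-hand side is globally Lipschitz in $v$. Finally, a smaller misattribution: the sets $\SSS_\rho$ in Section~3 vary with $\rho$ because the constraint is the $1$-level set of $g_\rho(u)=\tfrac12\norm{u}_\LLL^2+\hh_{1,\rho}(u)$ rather than of the pure $L^2$ norm; this is caused by the extra term $\hh_1$ on the $\lm$-side of the equation, not, as you suggest, by the movement of the trivial curve $\lm\mapsto\hat u_\lm$ (after translation the trivial solution is $v\equiv0$ for every $\lm$).
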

\begin{proof}
 We start by introducing some changes of variables.  First of all notice that, for all $\lm>0$, Problem  \eqref{eqn:singular-neumann-problem} has the constant solution $u(x)=\dfrac{1}{\sqrt{\lm}}$. 
 If we seek for solutions of the form
 $u=\dfrac{1}{\sqrt{\lm}}+z$ we easily end up with the equivalent problem on $z$:
%
\begin{equation}\label{eqn:z-neumann-problem}
  \begin{cases}
  -\Delta z=2\lm z-h_\lm(z)&\mbox{ in }\Omega,
   \\
   \sqrt{\lm}z>-1&\mbox{ in }\Omega,
   \\
   \nabla z\cdot\nu=0&\mbox{ on }\partial\Omega
   \end{cases}
 \end{equation}
 where $h_{\lm}:\left]-\dfrac{1}{\sqrt{\lm}},+\infty\right[\to\real$
 is defined by:
 \begin{equation*}
  h_\lm(s)=\frac{\lm\sqrt{\lm}s^2}{1+\sqrt{\lm}s}.
 \end{equation*}
 Now we consider another simple transformation: $v:=\sqrt{\lm}z$,
 so that \eqref{eqn:z-neumann-problem} turns out to be equivalent to:
 \begin{equation}\label{eqn:v-neumann-problem}
  \begin{cases}
  -\Delta v=2\lm\left( v-\dfrac12 h_1(v)\right)&\mbox{ in }\Omega,
   \\
   v>-1&\mbox{ in }\Omega,
   \\
   \nabla v\cdot\nu=0&\mbox{ on }\partial\Omega
   \end{cases}
 \end{equation}
 
 Now choose $s_0$ with $0<s_0<1/2$ and a $\CC^\infty$ cutoff function $\eta:\real\to [0,1]$ such that $\eta(s)=1$ for $|s|\leq s_0$, $\eta(s)=0$, for $|s|\geq2s_0$. Define
 $\tilde h_1:\real\to\real$ by 
 \begin{equation}\label{eqn:def-tilde-h-1}
  \tilde h_1(s):=\eta(s)\,h_1(s)
 \end{equation}
 (we agree that $\tilde h_1(-1)=0$).
 Then $\tilde h_1\in\mathcal{C}^\infty_0(\real;\real)$, 
 $\tilde h_1'(0)=h_1''(0)=0$, $\tilde h_1=h_1$ on $[-s_0,s_0]$.
 Denote by $\tilde H_1:\real\to\real$ 
 the primitive function for $\tilde h_1$ 
 (i.e. $\tilde H_1'=\tilde h_1$) such that $\tilde H_1(0)=0$.
 
 Now we apply the bifurcation theorem 
 \eqref{thm:abstract-bifurcation-theorem}
 with $\HHH:=\Hon$. $\LLL=\Ltwo$, $\hh=0$, $\hat\lm=\mu$,
 $\hh_1(v):=\frac12\int\limits_\Omega\tilde H_1(v)\,dx$.
 In this way
we get that there exists
$\rho_0>0$ such that for all $\rho\in]0,\rho_0[$ there are two 
distinct pairs
$(v_{1,\rho},\mu_{1,\rho})$ and $(v_{2,\rho},\mu_{2,\rho})$ which 
are \emph{weak} solutions of:
 
\begin{equation}\label{eqn:v-tilde-neumann-problem}
  \begin{cases}
  -\Delta v=\mu\left( v-\frac12\tilde h_1(v)\right)&\mbox{ in }\Omega,
   \\
   \nabla v\cdot\nu=0&\mbox{ on }\partial\Omega
   \end{cases}
 \end{equation}
and such that 
\begin{equation}
\label{eqn:infinitesimal}
 v_{i,\rho}\xrightarrow{\rho\to0}0\quad\mbox{( in $\Hon$ )}
 \quad,\quad
 \mu_{i,\rho}\xrightarrow{\rho\to0}\mu_k
 \qquad i=1,2.
\end{equation}
 We claim the there exists a constant $K$ such, for any $\mu\in[\hat\mu_k-1,\hat\mu_k+1]$ and  any weak solution $v$ of \eqref{eqn:v-tilde-neumann-problem}, $v$ is bounded and:
  \begin{equation}
 \label{eqn:Linfty-estimate}
  \|v\|_\infty\leq K\mu\norm{v-\frac12\tilde h_1(v)}_2.
 \end{equation}
 For this we use a standard bootstrap argument using the fact that the function 
 $k(s)=\left(s-\frac12\tilde h_1(s)\right)$, appearing on the right hand side of
 \eqref{eqn:v-tilde-neumann-problem}, verifies 
 \begin{equation}
 \label{eqn:k-lipschitz}
  |k(s)|\leq M|s|\qquad
  \forall s\in\real
 \end{equation}
 for a suitable $M$
 (since $\tilde h_1'$ is bounded).
 Assume that $v$ is a solution, i.e. $-\Delta v=\mu k(v)$,  and $v\in\Lpi[q]$ for some
 $q>1$ (for sure this is true for $q=2^*$). Then, by \eqref{eqn:k-lipschitz}, $k(v)\in\Lpi[q]$. From the standard Calderon--Zygmund theory (see e.g. Section 9.6 in \cite{GilbargTrudingerBook}), we have $v\in W^{2,q}(\Omega)$. Then, using the Sobolev embedding Theorem, either $v\in\Lpi[q_1]$ with $q_1\leq\frac{Nq}{N-2q}$ (if $2q\leq N$)  or
 $v\in\CC^{0,\alpha}$ with $\alpha>0$
 (in the case $2q>N$). Iterating this argument a finite number of times we get the conclusion. Notice that we could go further and prove that
 $v$ is $\CC^\infty$ and is a classical solution.
 
%
%
%
%

 Using  \eqref{eqn:infinitesimal} and \eqref{eqn:Linfty-estimate}
 we get that $v_{i,\rho}\to0$ in $\Linfty$ as $\rho\to0$, so $|v_{i,\rho}|<s_0$, $i=1,2$,
 for $\rho_0$ small. 
 This implies that $\tilde h_1(v_{i,\rho})=h_1(v_{i,\rho})$, and
 $v_{i,\rho}$ actually solve \eqref{eqn:v-neumann-problem} with 
 $\lm_{i,\rho}:=\dfrac{\mu_{i,\rho}}{2}$. \qquad
 Going backwards and
 setting  $u_{i,\rho}:=\dfrac{1}{\sqrt{\lm_{i,\rho}}}+\sqrt{\lm_{i,\rho}}v_{i,\rho}$, we find the desired solutions of \eqref{eqn:singular-neumann-problem}.

\end{proof}

\section{A variant for the two bifurcation branches
theorem for potential operators}
 
Let $\LLL$ and $\HHH$ be two Hilbert spaces  such that $\HHH\subset\LLL$ with
a compact embbedding $i:\HHH\to \LLL$. We use the notations
$\norm{\cdot}$, $\scal{\cdot}{\cdot}$ and $\norm{\cdot}_\LLL$, $\scal{\cdot}{\cdot}_\LLL$  to indicate the norms and inner products in $\HHH$ 
 and $\LLL$ respectively. Let $A:\HHH\to \HHH$ be a bounded linear simmetric operator such that:
 \begin{equation}\label{eqn:coercitivity}
  \scal{Au}{u}\geq\nu\norm{u}^2-M\norm{u}^2_\LLL\qquad\forall u\in \HHH
 \end{equation}
where $\nu>0$ and $M$ are two constants.
We say that $\lm\in\real$ is an ``eigenvalue for $A$'' if there exists
$e\in \HHH\setminus\set{0}$ with
\begin{equation*}
 \scal{Ae}{v}=\lm\scal{e}{v}_\LLL
 \qquad\forall v\in \HHH
\end{equation*}
which corresponds to say that:
\begin{equation*}
 Ae=\lm i^*e.
\end{equation*}
In this case we say that $e$ is an ``eigenvector'' corrsponding to $\lm$.

 It is well kwown that there exists a sequence $(\lm_n)$ of
 eigenvalues of $A$ with
 $\lm_n\leq\lm_{n+1}$, $\lm_n\to+\infty$, such that the corresponding eigenvectors generate $\HHH$. It is convenient to agreee
 that $\lm_0=-\infty$.
 We can suppose that for any $k\geq1$ we are given an eigenvector $e_k$ relative to $\lm_k$ with
 $\norm{e_n}_\LLL=1$, and:
 \begin{equation*}
  \scal{e_n}{e_m}=\scal{e_n}{e_m}_\LLL=0\qquad\mbox{ if }n\neq m.
 \end{equation*}
 If $\lm\in\real$ we define:
 \begin{equation*}
  E^-_\lm:=\spanned\set{e_i\st\lm_i<\lm}
  ,\ 
  E^0_\lm:=\spanned\set{e_i\st\lm_i=\lm}
  ,\ 
  E^+_\lm:=\overline{\spanned\set{e_i\st\lm_i>\lm}}^{(\HHH)}
 \end{equation*}
 ($E^0_\lm=\set{0}$ if $\lm$ is not an eigenvalue). If $\lm_n\leq\lm\leq\lm_{n+1}$ it is clear that:
 \begin{equation*}
  \sup_{\set{u\in E^-_\lm\st\norm{u}_\LLL=1}}\scal{Au}{u}\leq\lm_n
  \quad ,\quad 
  \inf_{\set{u\in E^+\lm\st\norm{u}_\LLL=1}}\scal{Au}{u}\geq\lm_{n+1},
 \end{equation*}
 while $\scal{Au}{u}=\lm$, if $u\in E^0_\lm$.

\begin{thm}[Bifurcation]\label{thm:abstract-bifurcation-theorem}
Let $\hh\in\CC^1(\HHH;\real)$, $\hh_1\in\CC^1(\LLL;\real)$
be such that:
\begin{equation}\label{eqn:ipotesi-su-h-h1}
\begin{gathered}
 \hh(0)=0,\quad 
 \nabla \hh(0)=0,\quad
 \lim_{u\to0}\frac{\norm{\nabla \hh(u)}_\LLL}{\norm{u}_\LLL}=0
 \\
 \hh_1(0)=0,\quad 
 \nabla_\LLL \hh_1(0)=0,\quad
 \lim_{u\to0}\frac{\norm{\nabla_\LLL \hh_1(u)}_\LLL}{\norm{u}_\LLL}=0.
\end{gathered}
\end{equation}
Notice that we are using the symbol $\nabla$ to denote the gradient with
respect to the inner product in $\HHH$ and $\nabla_\LLL$ for the corresponding
gradient in $\LLL$.

 Let $\hat\lm$ be an eigenvalue for $A$.
 Then, for any $\rho>0$ small, there exist $(u_{1,\rho},\lm_{1,\rho})$ and 
 $(u_{2,\rho},\lm_{2,\rho})$ which solve 
 the problem:
 \begin{equation}\label{eqn:abstract-bifurcation-equation}
  A u+\nabla \hh(u)=
  \lm i^*\left(u+\nabla_\LLL \hh_1(u)\right)
  \qquad\qquad
  u\neq0.
 \end{equation}
 such that 
 $u_{1,\rho}\neq u_{2,\rho}$ and:
 \begin{equation}\label{eqn:abstract-bifurcation-limits}
  u_{1,\rho}\xrightarrow{\HHH}0
  \quad,\quad
  u_{2,\rho}\xrightarrow{\HHH}0
  \quad,\quad
  \lm_{1,\rho}\to\hat\lm
  \quad,\quad
  \lm_{2,\rho}\to\hat\lm
  \qquad\qquad\mbox{as }\rho\to0.
 \end{equation}
\end{thm}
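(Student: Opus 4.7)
The plan is to recast equation (3.4) as a constrained critical-point problem and then apply a variational two-branches argument, after dealing with the fact that the natural constraint manifold depends on $\rho$.

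First, I would introduce the $C^1$ functionals
\begin{equation*}
 f(u) := \tfrac12\scal{Au}{u} + \hh(u),
 \qquad
 G(u) := \tfrac12\norm{u}_\LLL^2 + \hh_1(u),
\end{equation*}
together with the constraint $S_\rho := \{ u \in \HHH : G(u) = \rho^2/2\}$. A direct computation gives $\nabla f(u) = Au + \nabla\hh(u)$ and $\nabla G(u) = i^{*}(u + \nabla_\LLL\hh_1(u))$, so that the Lagrange multiplier identity $\nabla f(u) = \lm\,\nabla G(u)$ for a critical point of $f|_{S_\rho}$ is precisely (3.4), with the multiplier playing the role of $\lm$.

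Next, I would show that $S_\rho$ is a $C^1$-submanifold of $\HHH$ for small $\rho$ and construct an explicit homeomorphism between it and the fixed sphere $S := \{v \in \HHH : \norm{v}_\LLL = 1\}$. From the hypothesis $\nabla_\LLL\hh_1(u) = o(\norm{u}_\LLL)$ it follows that $\nabla G(u) \neq 0$ on a punctured neighborhood of $0$. For each $u \in S$, the scalar map $t \mapsto G(tu) = t^2/2 + \hh_1(tu)$ is strictly increasing on a right-neighborhood of $0$, so there is a unique $t(u,\rho) > 0$ with $G(t(u,\rho)\,u) = \rho^2/2$, depending continuously on $(u,\rho)$. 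The map $\Phi_\rho : S \to S_\rho$, $\Phi_\rho(u) := t(u,\rho)\,u$, is then a homeomorphism which, after rescaling by $1/\rho$, tends to the identity as $\rho \to 0^+$. This supplies the family of projections mentioned in the Introduction.

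Once this framework is in place, I would establish a Palais-Smale-type compactness condition for $f|_{S_\rho}$ using (3.1), the compactness of $i : \HHH \to \LLL$, and the sub-quadratic behavior of $\hh, \hh_1$ at the origin. For the two critical points, I would set $\tilde f_\rho := f - \hat\lm\,G$ and use the spectral decomposition $\HHH = E^-_{\hat\lm} \oplus E^0_{\hat\lm} \oplus E^+_{\hat\lm}$. Pulling back by $\Phi_\rho$, the spectral bounds yield $\tilde f_\rho \circ \Phi_\rho \leq o(\rho^2)$ on $S \cap (E^-_{\hat\lm} \oplus E^0_{\hat\lm})$ and $\tilde f_\rho \circ \Phi_\rho \geq -o(\rho^2)$ on $S \cap (E^0_{\hat\lm} \oplus E^+_{\hat\lm})$, with $\tilde f_\rho \leq -c\rho^2$ on $S \cap E^-_{\hat\lm}$ and $\tilde f_\rho \geq c\rho^2$ on $S \cap E^+_{\hat\lm}$ for some $c > 0$ independent of $\rho$. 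Two min-max levels
\begin{equation*}
 c_{1,\rho} = \inf_{\gamma \in \Gamma_1}\sup_{u \in \gamma}\tilde f_\rho(u),
 \qquad
 c_{2,\rho} = \sup_{\gamma \in \Gamma_2}\inf_{u \in \gamma}\tilde f_\rho(u),
\end{equation*}
with $\Gamma_1, \Gamma_2$ families of subsets of $S_\rho$ linking respectively with $\Phi_\rho(S \cap E^+_{\hat\lm})$ and $\Phi_\rho(S \cap E^-_{\hat\lm})$, together with the deformation scheme of \cite{BenCanBifur} and the compactness of Step 3, yield critical points $u_{i,\rho}$ of $f|_{S_\rho}$ with Lagrange multipliers $\lm_{i,\rho} \to \hat\lm$ and $u_{i,\rho} \to 0$ in $\HHH$, which is \eqref{eqn:abstract-bifurcation-limits}.

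The hardest step will be the second one: the family $\{S_\rho\}$ is not a collection of rescaled spheres, and one must check that the homeomorphisms $\Phi_\rho$ are regular enough to transport the linking structure used in Step 4 from the fixed sphere $S$ to each $S_\rho$. A secondary obstacle is the distinctness of $u_{1,\rho}$ and $u_{2,\rho}$ in the degenerate case $c_{1,\rho} = c_{2,\rho} = 0$, which I would handle either by Lusternik-Schnirelmann category on $\Phi_\rho(S \cap E^0_{\hat\lm})$ (using that the unit sphere in the finite-dimensional eigenspace has category at least two when $\dim E^0_{\hat\lm} \geq 2$, or consists of two antipodal points when $\dim E^0_{\hat\lm} = 1$) or by a finite-dimensional Lyapunov-Schmidt reduction onto $E^0_{\hat\lm}$, which replaces the original problem with that of finding two critical points of a smooth function on a finite-dimensional sphere.
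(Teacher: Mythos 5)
Your plan recovers the right framework — recast~\eqref{eqn:abstract-bifurcation-equation} as a Lagrange multiplier problem for $f$ on a one-parameter family of constraint manifolds $S_\rho$, build a homeomorphism from a fixed sphere to $S_\rho$, then use a two-branch variational argument. This is broadly the same strategy as the paper, but your two key ingredients are genuinely different. For the homeomorphism you use the naive radial projection $\Phi_\rho(u)=t(u,\rho)u$ from the $\LLL$-sphere $S$ to $S_\rho$, whereas the paper first truncates the constraint manifold to $\SSS_{\rho,\delta}=\SSS_\rho\cap\{\norm{\Pi_2(u)}_\LLL\geq\delta\}$ (a manifold \emph{with} boundary, $\Pi_2$ being the projection onto the eigenspace of $\hat\lm$) and uses a more elaborate map $\Gamma$ built to preserve the level set $\norm{\Pi_2(u)}_\LLL=\delta$. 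For the existence/distinctness step you propose two linking min-max values plus Lusternik--Schnirelmann category on the eigensphere (or Lyapunov--Schmidt) to break degeneracy, in the spirit of B\"ohme--Marino, whereas the paper runs the sublevel-set relative-category argument of~\cite{BenCanBifur}, estimating $\mathrm{cat}_{(B_\rho^{3\eps},A_\rho^{2\eps})}(B_\rho^{3\eps})\geq2$ and treating the degenerate and nondegenerate cases in one stroke. Both routes are legitimate; yours is closer to the classical argument and avoids the boundary bookkeeping, while the paper's truncation is precisely what lets it rule out spurious critical points at the boundary (and spurious Lagrange multipliers $\mu$ for the constraint $\norm{\Pi_2(u)}_\LLL\geq\delta$) via inequalities~\eqref{eqn:no-pti-critici-su-frontiera}--\eqref{eqn:non-critical-points-on-Sigma-delta}.

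There is, however, a point where your construction needs repair before the rest can stand. You take $S_\rho=\{G(u)=\rho^2/2\}$ as the constraint set and claim $\Phi_\rho:S\to S_\rho$ is a homeomorphism. What your argument actually shows is that for each $u\in S$ the ray $\{tu:t>0\}$ meets $\{G=\rho^2/2\}$ at a unique small $t=t(u,\rho)$; it does \emph{not} show that every element of $S_\rho$ lies on such a ray at small parameter. Since $\hh_1$ is only controlled near the origin, the level set $\{G=\rho^2/2\}$ may well have pieces far from $0$, so $\Phi_\rho$ need only be a homeomorphism onto a connected component of $S_\rho$ near the origin, and without some a priori confinement the Palais--Smale condition on $S_\rho$ is also in doubt. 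The paper handles exactly this by (i) cutting $\hh$ and $\hh_1$ off outside a small ball (replacing them by $\tilde\hh,\tilde\hh_1$ after~\eqref{eqn:small-gradients}, which does not affect the bifurcation statement since the bifurcating solutions eventually enter the ball) and (ii) confining the rescaled constraint set to the annulus $\CC=\{1<\norm{u}_\LLL<2\}$ and proving that for $\rho$ small the constraint surface stays strictly inside that annulus. You should either import those two devices or explicitly restrict $S_\rho$ to a small $\LLL$-annulus around the origin and verify the analogous closedness; otherwise the homeomorphism and the deformation argument that follows are not justified as stated.
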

\newcommand{\const}{\frac14}
\newcommand{\twoconst}{2\frac12}
\begin{proof}
 We adapt the proof of Lemma 3.4 in \cite{BenCanBifur}.
 Let $\hat\lm=\lm_i=\lm_k$  with $\lm_{i-1}<\lm_i$ and 
 $\lm_k<\lm_{k+1}$.
  We define
$f:\HHH\to\real$ and $g:\LLL\to\real$ by:
\begin{equation*}
 f(u):=\frac{1}{2}\scal{Au}{u}+\hh(u)
 \quad,\quad
 g(u):=\frac{1}{2}\norm{u}_\LLL^2+\hh_1(u),
\end{equation*}

Let $\CC:=\set{u\in\LLL\st1<\norm{u}_\LLL<2}$.
Moreover, if $0<\rho<1$ we define:
\begin{gather*}
 f_\rho(u):=\frac{1}{\rho^2}f(\rho u),
 \quad 
 g_\rho(u):=\frac{1}{\rho^2}g(\rho u),
 \\
 \hh_\rho(u):=\frac{1}{\rho^2}\hh(\rho u),
 \quad
 \hh_{1,\rho}(u):=\frac{1}{\rho^2}\hh_1(\rho u),
 \\
 \SSS_\rho:=\set{u\in\CC\st g_\rho(u)=1}.
\end{gather*}
In fact 
$f_\rho(u)=\dfrac{1}{2}\scal{Au}{u}+\hh_\rho(u)$
and $g_\rho(u)=\dfrac{1}{2}\norm{u}_\LLL^2+\hh_{1,\rho}(u)$.

%
%
%

Since the result we are proving only involves the behaviour of $\hh,\hh_1$ near zero, we are allowed to modify $\hh$ and $\hh_1$ outside of
a small ball around the origin.
More precisely using \eqref{eqn:ipotesi-su-h-h1} we can find $R$ in $]0,1/3[$ such that
\begin{equation}\label{eqn:small-gradients}
 \norm{\nabla\hh(u)}\leq\frac{\nu}{8}\norm{u}\ 
 \forall u\mbox{ with }\norm{u}<3R,
 \quad
 \norm{\nabla_\LLL\hh_1(u)}\leq\frac{1}{2}\norm{u}_1\ 
 \forall u\mbox{ with }\norm{u}_1<3R,
\end{equation}
and define $\tilde\hh(u):=\eta(\norm{u})\hh(u)$, 
$\tilde\hh_1(u):=\eta(\norm{u}_1)\hh_1(u)$, where $\eta:[0,+\infty[\to[0,1]$ is a cutoff function with $\eta(s)=1$ for $0\leq s\leq R$,  $\eta(s)=0$ for $s\geq3R$, and $\eta'(s)\leq1$.  Now since
\[
   \tilde\hh(u)=\hh(u)\quad\forall u\mbox{ with } \norm{u}<R,\qquad
   \tilde\hh_1(u)=\hh_1(u)\quad\forall u\mbox{ with } \norm{u}_\LLL<R,
\]
then the conclusion of Theorem \ref{thm:abstract-bifurcation-theorem} holds for $\hh,\hh_1$ if and only if it holds for $\tilde\hh,\tilde\hh_1$. Indeed the first component $u_\rho$
of a bifucation branch (for any of the two problems) eventually verifies $\norm{u_\rho}<R$ and $\norm{u_\rho}_1<R$. So from now on we
replace $\hh$ with $\tilde\hh$ and $\hh_1$ with $\tilde\hh_1$, maintaining the same notation. With simple computations we can deduce from \eqref{eqn:small-gradients} that the redifined functions verify:
%
%
%
%
\begin{equation}\label{eqn:h-h1-nabla-small}
\thetag{a}\quad
\left|\nabla\hh(u)\right|\leq\frac\nu4\norm{u}
 \ \forall u\in \HHH,
 \qquad
 \thetag{b}\quad
 \left|\nabla_\LLL\hh_1(u)\right|\leq\norm{u}_\LLL
 \ \forall u\in \LLL.
\end{equation}
From \thetag{a} in \eqref{eqn:h-h1-nabla-small} we get 
\begin{equation}\label{eqn:h-quadratic}
 |\hh(u)|\leq\frac\nu4\norm{u}^2
 \Rightarrow
 |\hh_\rho(u)|\leq\frac\nu4\norm{u}^2
 \qquad
 \forall u\in \HHH,\,\forall\rho\in[0,1]
\end{equation}

%
Using \eqref{eqn:coercitivity} and \eqref{eqn:h-quadratic} we get  that:
\begin{equation}
\label{eqn:sublevels-bounded}
\norm{u}^2\leq \frac4\nu\left(f_\rho(u)+M\norm{u}^2_\LLL\right)
\end{equation}

From \eqref{eqn:ipotesi-su-h-h1} and \eqref{eqn:sublevels-bounded}
we  easily get that, if $c\in\real$ and  $\rho\to0$:
\begin{equation}\label{eqn:estimates-h-h1}
  \begin{gathered}
  \sup_{u\in\CC,f_\rho(u)\leq c}|\hh_\rho(u)|\to0\quad,\quad 
  \sup_{u\in\CC}|\hh_{1,\rho}(u)|\to0,
  \\
  \sup_{u\in\CC,f_\rho(u)\leq c}\norm{\nabla \hh_\rho(u)}\to0
  \quad,\quad 
  \sup_{u\in\CC}\norm{\nabla_\LLL \hh_{1,\rho}(u)}_\LLL\to0.
  \end{gathered}
\end{equation}

So if we extend the definition to $\rho=0$ by letting
$f_0(u):=\frac12\scal{Au}{u}$ and $g_0(u):=\frac12\norm{u}^2_\LLL$,
then  $(\rho,u)\mapsto f_\rho(u)$ is continuous on $[0,+\infty[\times \HHH$ 
and $(\rho,u)\mapsto g_\rho(u)$ is continuous on $[0,+\infty[\times \LLL$. 
We also define $\SSS_\rho$ for $\rho=0$:
\begin{equation*}
 \SSS_0:=\set{u\in \LLL\st g_0(u)=1}=\set{u\in \LLL\st \norm{u}_\LLL^2=2}.
\end{equation*}
Notice that the critical values of $f_0$ on $\SSS_\rho$ are precisely the eigenvalues $\lm_n$.

We claim that there exist $\bar\rho>$ such that 
the $\LLL$-closure of $\SSS_\rho$ is contained in $\CC$ for all  $\rho\in]0,\bar\rho]$ in other terms $\SSS_\rho$ is closed for $\rho>0$ small.
Indeed if the claim were false there would exist two sequences $(\rho_n)$ and  
$(u_n)$ such that $\rho_n\to0$, $g_{\rho_n}(u_n)=\frac{\norm{u_n}_\LLL^2}{2}+\frac{\hh_1(\rho_n u_n)}{\rho_definen^2}=1$, and $\norm{u_n}_\LLL\in\set{1,2}$.
From \eqref{eqn:ipotesi-su-h-h1} we would have $\frac{\hh_1(\rho_n u_n)}{\rho_n^2}=\frac{\hh_1(\rho_n u_n)}{\norm{\rho_n u_n}_\LLL^2}\norm{u_n}_\LLL^2\to0$, so $\norm{u_n}_\LLL\to\sqrt{2}$ which yields a contradiction for $n$ large.

Let us split $\HHH$ as $\HHH=\X_1\oplus\X_2\oplus\X_3$, where:
\begin{equation*}
 \X_1:=E_{\hat\lm}^-
 \quad,\quad
 \X_2:=E_{\hat\lm}^0
 \quad,\quad
 \X_3:=E_{\hat\lm}^+.
\end{equation*}
and consider the orthogonal projections $\Pi_i:\HHH\to\X_i$. $i=1,2,3$. We also denote $\Pi_{13}:=\Pi_1+\Pi_3$. 
Given $\rho\in[0,\bar\rho]$ and $\delta\in]0,1[$, we set:
\begin{equation*}
 \CC_\delta:=\set{u\in\CC\st\norm{\Pi_2(u)}_\LLL\geq\delta}
 \quad,\qquad
 \SSS_{\rho,\delta}:=\SSS_\rho\cap\CC_\delta.
\end{equation*}
Since $\SSS_\rho$ is closed, then
$S_{\rho,\delta}$ is a smooth manifold with boundary, the boundary being:
\begin{equation*}
 \Sigma_{\rho,\delta}:=
 \set{u\in\SSS_\rho\st\norm{\Pi_2(u)}_\LLL=\delta}.
\end{equation*}
Notice that $\SSS_{0,\delta}\neq\emptyset$ (
$\delta<1$). Let us indicate by $\bar f_{\rho}$ the restriction
of $f_\rho$ on $\SSS_{\rho,\delta}$. 

We will use the notion of
lower critical point for $\bar f_\rho$ (see \cite{BenCanBifur, MarSacSNS1997} and the references therein):
$u$ is (lower) critical for $\bar f_\rho$ if and only there exist
$\lm,\mu\in\real$ such that $\mu\geq0$, $\mu=0$ if 
$\norm{\Pi_2(u)}_\LLL>\delta$,  and:
\begin{equation}\label{eqn:u-critico-su-Cdelta}
 \scal{Au}{v}+\scal{\nabla \hh_\rho(u)}{v}=
 \lm\scal{u+\nabla_\LLL \hh_{1,\rho}(u)}{v}_\LLL+\mu\scal{\Pi_2(u)}{v}_\LLL
 \quad\forall v\in \HHH.
\end{equation}

Define  $\Gamma:\CC_\delta\times[1/2,2]\to\CC_\delta$ and 
$\ph:[0,\bar\rho]\times\CC_\delta\times[1/2,2]\to\real$ by:
\begin{align*}
 \Gamma(u,t):=&
 \frac{\delta\Pi_2(u)}{\norm{\Pi_2(u)}_\LLL}+
 t\left(u-\frac{\delta\Pi_2(u)}{\norm{\Pi_2(u)}_\LLL}\right)=
 \\
 &t\Pi_{13}(u)+
 (\delta+t\left(\norm{\Pi_2(u)}_\LLL-\delta\right))
  \frac{\Pi_2(u)}{\norm{\Pi_2(u)}_\LLL}
 \\
 \ph(\rho,u,t):=&g_{\rho}\left(\Gamma(u,t)\right)
\end{align*}
%
%
%
%
%
%
%
%
With easy computations:
\begin{multline*}
\ph(0,u,t)=
\frac{1}{2}
\left(
 t^2\norm{\Pi_{13}(u)}_\LLL^2+
 \left(
 \delta+t\left(\norm{\Pi_2(u)}_\LLL-\delta\right)
 \right)^2
\right)=
\\
 \frac12\left(
 t^2(\norm{u}^2_\LLL-2\delta\norm{\Pi_2(u)}_\LLL+\delta^2)+
 2\delta t(\norm{\Pi_2(u)}_\LLL-\delta)+\delta^2
 \right)
\end{multline*}
Since $1<\norm{u}_\LLL<2$ and $\norm{\Pi_2(u)}_\LLL\geq\delta$, we have:
\begin{equation*}
\frac{t^2}{2}-
2\delta t^2
\leq
 \ph(0,u,t)\leq
 \left(2+\frac{\delta^2}{2}\right)t^2+2\delta t+\frac{\delta^2}{2}.
\end{equation*}
In particular:
\begin{equation*}
 2-2\delta\leq\ph(0,u,2)
 \quad,\quad
 \ph\left(0,u,1/2\right)\leq
 \frac{1}{2}+\frac{\delta^2}{8}+\delta+\frac{\delta^2}{2}<\frac{1}{2}+2\delta.
\end{equation*}
We can choose $\delta_0>0$ so that 
$2-2\delta>3/2$ and 
$\dfrac{1}{2}+2\delta<3/4$
for all $\delta\in]0,\delta_0]$. 
From now on we consider   $0<\delta\leq\delta_0$. By \eqref{eqn:estimates-h-h1}, up to shrinking $\bar\rho$, 
we have:
\begin{equation*}
 \sup_{u\in\CC_{\delta}}\ph(\rho,u,1/2)<1,
 \qquad
 \inf_{u\in\CC_{\delta}}\ph(\rho,u,2)>1
 \qquad\qquad
 \forall\rho\in[0,\bar\rho].
\end{equation*}
Moreover:
\begin{equation*}
  \frac{\partial}{\partial t}\ph(0,u,t)=
 t(\norm{u}^2_\LLL-2\delta\norm{\Pi_2(u)}_\LLL+\delta^2)+
 \delta(\norm{\Pi_2(u)}_\LLL-\delta)\geq
 t(1-4\delta)
\end{equation*}
so, up to shrinking $\delta_0$, we have
 $\dfrac{\partial}{\partial t}\ph(0,u,t)\geq\frac{1}{4}$
 for all $t\geq\dfrac{1}{2}$.  Up to further shrinking $\bar\rho>0$ (again we use \eqref{eqn:estimates-h-h1}),
 we have that $\rho\in[0,\bar\rho]$, $\delta\in]0,\delta_0]$,
 $u\in\CC_{\delta}$ imply
 \begin{equation*}
  \ph(\rho,u,1/2)<1,\quad
  \ph(\rho,u,2)>1,\quad 
  \frac{\partial}{\partial t}\ph(\rho,u,t)\geq\frac{1}{8}
  \quad
  \forall t\in[1/2,2].
 \end{equation*}
 We can therefore conclude that for all $u\in[0,\bar\rho]$ and $u\in\CC_\delta$ there exists a unique $\bar t=\bar t(\rho,u)$ in
 $[1/2,2]$ such that $\ph(\rho,u,\bar t(u,\rho))=1$, that is
 $\Gamma(u,\bar t(\rho,u))\in\SSS_{\rho,\delta}$. It is easy to 
 check that $\bar t:[0,\bar\rho]\times\CC_\delta\to[1/2,2]$ is continuous
 and so is  $\Phi:[0,\bar\rho]\times\CC_\delta\to\SSS_{\rho,\delta}$ defined by 
 $\Phi(\rho,u):=\Gamma(u,\bar t(\rho,u))$.
 Notice that:
 \begin{equation*}
  t\in[1/2,2], u\in\CC_\delta,\ \norm{\Pi_2(u)}_\LLL=\delta
  \ \Rightarrow\ 
   \norm{\Pi_2(\Gamma(u,t))}_\LLL=\delta.
 \end{equation*}
 Therefore
 $\Phi(\rho,\cdot)$ maps 
 $\set{u\in\CC_\delta,\norm{\Pi_2(u)}_\LLL=\delta}$
 into $\Sigma_{\rho,\delta}$. Also notice that 
 $\Phi(\rho,u)\circ\Phi(0,u)=u$ whenever $u\in\SSS_{\rho,\delta}$ and 
 $\Phi(0,u)\circ\Phi(\rho,u)=u$ whenever $u\in\SSS_{0,\delta}$.
 We have thus proven that $\Phi(\rho,\cdot)|_{S_{0,\delta}}$ is an
 omeomorphism from
 $(S_{0,\delta},\Sigma_{0,\delta})$ to 
 $(S_{\rho,\delta},\Sigma_{\rho,\delta})$
 whose inverse is $\Phi(0,\cdot)|_{S_{\rho,\delta}}$. 

Now let:
\begin{align}\label{eqn:def-a-b-rho-1}
 a'_\rho:=&\sup_{(\X_1\oplus\X_2)\cap\Sigma_{\rho,\delta}}f_\rho
 &
 a''_\rho:=&\inf_{(\X_2\oplus\X_3)\cap\SSS_{\rho,\delta}}f_\rho
 \\
 \label{eqn:def-a-b-rho-2}
 b'_\rho:=&\sup_{(\X_1\oplus\X_2)\cap\SSS_{\rho,\delta}}f_\rho
 &
 b''_\rho:=&\inf_{(\X_2\oplus\X_3)\cap\Sigma_{\rho,\delta}}f_\rho.
\end{align}
Notice that, by definition, $a''_\rho\leq b'_\rho$.
For $\rho=0$ it is easy to see that:
\begin{equation*}
 a'_0=
 \lm_{i-1}+
 \frac{\delta^2}{2}(\hat\lm-\lm_{i-1})
 <
 \hat\lm=
 a''_0
 =
 b'_0=
 \hat\lm
 <
 \lm_{k+1}-\frac{\delta^2}{2} (\lm_{k+1}-\hat\lm)
 =
 b''_0
\end{equation*}
(remind that $0<\delta<1$). 
Let $\eps_0>0$ with 
$\eps_0<\hat\lm-\lm_{i-1}$.
We claim that, if $\delta^2(\hat\lm-\lm_{i-1})<2\eps_0$,
then:

\begin{equation}\label{eqn:no-pti-critici-su-frontiera}
 \begin{gathered}
 \mbox{there exists no }u\in\Sigma_{0,\delta} 
 \mbox{ with $u$ lower critical for $\bar f_0$ and }
 \ 
 \lm_{i-1}+\eps_0
 \leq f_0(u)
 .
 \end{gathered}
\end{equation}
By contradiction assume that such a $u$ exists; then there exist 
$\lm\in\real$ and $\mu\geq0$ such that  
\eqref{eqn:u-critico-su-Cdelta} holds.
Let $u_i=\Pi_i(u)$, $i=1,2,3$. Taking $v=u_2$ in 
\eqref{eqn:u-critico-su-Cdelta} (with $\rho=0$) yields:
\begin{equation*}
  \hat\lm\norm{u_2}_\LLL^2=
  \scal{Au_2}{u_2}=
  \scal{Au}{u_2}=
  \lm\scal{u}{u_2}_\LLL+\mu\scal{u_2}{u_2}_\LLL=
  (\lm+\mu)\norm{u_2}_\LLL^2.
\end{equation*}
Since $\norm{u_2}_\LLL=\delta>0$, we have $\lm+\nu=\hat\lm$,
so $\lm=\hat\lm-\nu\leq\hat\lm$.
Taking $v=u_3$:
\begin{equation*}
 \lm_{k+1}\norm{u_3}^2\leq
 \scal{Au_3}{u_3}=
 \scal{Au}{u_3}=
 \scal{\lm u+\mu u_2}{u_3}=
 \lm\norm{u_3}^2_\LLL\leq 
 \hat\lm\norm{u_3}_\LLL^2.
\end{equation*}
Since $\lm_{k+1}<\hat\lm$, we have $u_3=0$. 
Then $u\in\X_1\oplus\X_2\cap\Sigma_{0,\delta}$, 
which implies 
\begin{equation*}
 f_0(u)\leq a'_0=
 \lm_{i-1}+
 \frac{\delta^2}{2}(\hat\lm-\lm_{i-1})<
 \lm_{i-1}+\eps_0
\end{equation*}
which gives  a contradiction.
Hence the claim   is proven.  Notice that \eqref{eqn:no-pti-critici-su-frontiera} implies that the only critical
value $\lm_0$ of $\bar f_0$, with 
$\lm_{i-1}+\eps_0\leq\lm_0\leq\lm_{k+1}-\eps_0$, is 
$\lm_0=\hat\lm$. Indeed assume  $u_0$ to be a critical 
point with $\bar f_0(u_0)=\lm_0$: then, by \eqref{eqn:no-pti-critici-su-frontiera}, $u_0\notin\Sigma_{0,\delta}$ 
so \eqref{eqn:u-critico-su-Cdelta} holds
with $\mu=0$ which easily implies $\lm_0=\hat\lm$.

From now on we fix $\eps_0>0$ such that
$5\eps_0<\min(\hat\lm-\lm_{i-1},\lm_{k+1}-\hat\lm)$ and 
$\delta>0$ such that $\delta^2(\hat\lm-\lm_{i-1})\leq\eps_0$
(so \eqref{eqn:no-pti-critici-su-frontiera} holds with $\eps_0/2$).
Using 
\eqref{eqn:estimates-h-h1} we can derive that,  
given $\eps\in]0,\eps_0]$ there exists 
$\rho(\eps)\in]0,\bar\rho]$ such that, if $\rho\in]0,\rho(\eps)]$:
\begin{gather}
 \begin{gathered}
 a'_\rho\leq
 \lm_{i-1}+\eps_0<
 \hat\lm-4\eps<
 \hat\lm-\eps 
 \leq
 a''_\rho
 \leq
 \inf_{\X_2\cap\SSS_{\rho,\delta}}f_\rho
 \leq\qquad\qquad
 \\
 \qquad\qquad\leq
 \sup_{\X_2\cap\SSS_{\rho,\delta}}f_\rho
 \leq 
 b'_\rho 
 \leq
 \hat\lm+\eps< 
 \hat\lm+4\eps<
 \lm_{k+1}-\eps_0\leq
  b''_\rho;
 \end{gathered}\label{eqn:inequalities-a-b-rho}
\\
  \begin{gathered}\label{eqn:non-critical-points-on-Sigma-delta}
  \mbox{there are no $u\in\Sigma_{\rho,\delta}$ with $u$ lower critical for $\bar f_\rho$ and}
  \\
  f_\rho(u)\in\left[\lm_{i-1}+\eps_0,\lm_{k+1}-\eps_0\right];
  \end{gathered}
\\
  \begin{gathered}\label{eqn:non-critical-points-outside-hat-lambda}
  \mbox{there are no $u\in\SSS_{\rho,\delta}$ with $u$ lower critical for $\bar f_\rho$ and}
  \\
  f_\rho(u)\in\left[\lm_{i-1}+\eps_0,\hat\lm-\eps\right]\cup\left[\hat\lm+\eps,\lm_{k+1}-\eps_0\right];
  \end{gathered}
  \\ \label{eqn:stima-Phi-rho}
  \left|f_0(u)-f_\rho(\Phi(\rho,u))\right|<\eps 
  \qquad\qquad
  \forall u\in\SSS_{0,\delta}\mbox{ with } f_0(u)\leq\lm_{k+1}-\eps_0;
   \\ \label{eqn:stima-Phi-zero}
   \left|f_\rho(u)-f_0(\Phi(0,(u))\right|<\eps 
   \qquad\qquad
   \forall u\in\SSS_{\rho,\delta}\mbox{ with } f_\rho(u)\leq\lm_{k+1}-\eps_0.
\end{gather}
To prove \eqref{eqn:stima-Phi-rho} and \eqref{eqn:stima-Phi-zero} we use 
\eqref{eqn:sublevels-bounded}.
If $\sm\in[\eps,4\eps]$, set 
$A_\rho^\sm:=\bar f_\rho^{\hat\lm/2-\sm}$,
$B_\rho^\sm:=\bar f_\rho^{\hat\lm/2+\sm}$ i.e.:
\begin{equation*}
 A_\rho^\sm=
 \set{u\in S_{\rho,\delta}\st f_\rho(u)\leq\hat\lm/2-\sm},
 B_\rho^\sm=
 \set{u\in S_{\rho,\delta}\st f_\rho(u)\leq\hat\lm/2+\sm}.
\end{equation*}
Moreover set $\tilde A_\rho^\sm:=\Phi(\rho,A_0^\sm)$,
$\tilde B_\rho^\sm:=\Phi(\rho,B_0^\sm)$,
$\hat A_\rho^\sm:=\Phi(0,A_\rho^\sm)$,
$\hat B_\rho^\sm:=\Phi(0,B_\rho^\sm)$.
From \eqref{eqn:stima-Phi-rho} and \eqref{eqn:stima-Phi-zero} (remind that $\Phi(\rho,\cdot)^{-1}=\Phi(0,\cdot)$) we get:
\begin{gather*}
 A_0^{4\eps}\subset\Phi(0,A^{3\eps}_\rho)
 \subset A_0^{2\eps}\subset\Phi(0,A^{\eps}_\rho)
 \quad,\quad
 B_0^{\eps}\subset\Phi(0,B^{2\eps}_\rho)
 \subset B_0^{3\eps}\Phi(0,B^{4\eps}_\rho),
 \\
 A_\rho^{4\eps}\subset\Phi(\rho,A^{3\eps}_0)
 \subset A_\rho^{2\eps}\subset\Phi(\rho,A^{\eps}_0)
 \quad,\quad 
 B_\rho^{\eps}\subset\Phi(\rho,B^{2\eps}_0)
 \subset B_\rho^{3\eps}\Phi(\rho,B^{4\eps}_0).
\end{gather*}

The above inclusions give rise to the following diagram in homology:
\begin{equation*}
\begin{tikzcd}
 H_q(B_{\rho}^{\eps},A_{\rho}^{4\eps})
 \arrow[r,"i_1^*"]\arrow[d,"\phi_1^*"]&
 H_q(\tilde B_{\rho}^{2\eps},\tilde A_{\rho}^{3\eps})
 \arrow[r,"i_2^*"]&
 H_q(B_{\rho}^{3\eps},A_{\rho}^{2\eps})
 \arrow[r,"i_3^*"]\arrow[d,"\phi_3^*"]&
 H_q(\tilde B_{\rho}^{4\eps},\tilde A_{\rho}^{\eps})
\\
 H_q(\hat B_{\rho}^{\eps},\hat A_{\rho}^{4\eps})
 \arrow[r,"j_1^*"]&
 H_q(B_{0}^{2\eps},A_{0}^{3\eps})
 \arrow[r,"j_2^*"]\arrow[u,"\phi_2^*"]&
 H_q(\hat B_{\rho}^{3\eps},\hat A_{\rho}^{2\eps})
 \arrow[r,"j_3^*"]&
 H_q(B_{0}^{4\eps},A_{0}^{\eps})\arrow[u,"\phi_2^*"]
\end{tikzcd}
\end{equation*}
where $i_1$, $i_2$, $i_3$,  $j_1$, $j_2$, $j_3$ are embeddings and $\phi_1$,  $\phi_3$
are restrictions of $\Phi(0,\cdot)$, while $\phi_2$,
$\phi_4$
are restrictions of $\Phi(\rho,\cdot)$.  It is clear that
$\phi^*_i$ are isomorphisms.
Notice that $i_2\circ\phi_2\circ j_1\circ\phi_1$
is the 
embedding of $(B_\rho^\eps,A_\rho^{4\eps})$ in
 $(B_\rho^{3\eps},A_\rho^{2\eps})$  and
 $j_3\circ\phi_3\circ i_2\circ\phi_2$
  is the 
embedding of $(B_0^{2\eps},A_0^{3\eps})$ in
 $(B_0^{4\eps},A_0^{\eps})$ 

Since there are no critical values for $\bar f_0$ in
$[\hat\lm-4\eps,\hat\lm-\eps]\cup[\hat\lm+\eps,\hat\lm+4\eps]$ (see \eqref{eqn:non-critical-points-outside-hat-lambda}), then the pair 
$(B_\rho^{\eps},A_\rho^{4\eps})$ 
is a deformation retract of the pair 
$(B_\rho^{3\eps},A_\rho^{2\eps})$, so 
$i_2^*\circ \phi_2^*\circ j_1^*\circ\phi_1^*$ is an isomorphism. 
For analoguous reasons  
$j_3^*\circ\phi_3^*\circ i_2^*\circ\phi_2^*$ is an 
isomorphism. It follows that 
$i_2^*\circ\phi_2^*:H_q(B_0^{2\eps},A_0^{3\eps})\to H_q(B_\rho^{3\eps},A_\rho^{2\eps})$ is an isomorphism.

From the definitions \eqref{eqn:def-a-b-rho-1} and \eqref{eqn:def-a-b-rho-2} we have the inclusions:
\begin{equation*}
 (\SSS_{0,\delta}\cap(\X_1\oplus\X_2),\Sigma_{0,\delta}\cap(\X_1\oplus\X_2))\subset
 (B_\rho^{3\eps},A_\rho^{2\eps})\subset
 (\SSS_{0,\delta}\setminus\X_3,\SSS_{\rho,\delta}\setminus(\X_2\oplus\X_3))
\end{equation*}
which allow to repeat the arguments of \cite{BenCanBifur}
(see also the proof of Lemma 2.3 in \cite{MarSacSNS1997}).
to estimate the relative category:
\begin{equation*}
 \mathrm{cat}_{(B_\rho^{3\eps},A_\rho^{2\eps})}(B_\rho^{3\eps})\geq2
 \qquad\forall \rho\in]0,\rho(\eps)].
\end{equation*}
This implies that $\bar f_\rho$ hat at least two critical points $\bar u_{1,\rho}$, $\bar u_{2,\rho}$ with 
$\hat\lm-3\eps\leq f_\rho(\bar u_{i,\rho})\leq\hat\lm+2\eps$.  We have
$\norm{\bar u_{i,\rho}}_\LLL^2/2+\hh_{1,\rho}(\bar u_{i,\rho})=1$ and:
\begin{equation}\label{eqn:equation-rho}
  \scal{A\bar u_{i,\rho}+\nabla \hh_\rho(\bar u_{i,\rho})}{v}=\lm_{i,\rho}
  \scal{\bar u_{i,\rho}+\nabla_\LLL \hh_{1,\rho}(\bar u_{i,\rho})}{v}_\LLL
  \qquad\forall v\in \HHH.
\end{equation}
for a suitable Lagrange multiplier $\lm_{i,\rho}\in\real$
(there is no $\mu$, due to \eqref{eqn:non-critical-points-on-Sigma-delta}).
Taking $v=\bar u_{i,\rho}$ in \eqref{eqn:equation-rho}:
\begin{multline*}
 \left[\hat\lm-2\eps,\hat\lm+3\eps\right]\ni
 \bar f(\bar u_{i,\rho})=
 \frac12\scal{A\bar u_{i,\rho}}{\bar u_{i,\rho}}+
 \hh_{\rho}(\bar u_{i,\rho})=
 \\
 \hh_{\rho}(\bar u_{i,\rho})-
 \frac12\scal{\nabla \hh_{\rho}(\bar u_{i,\rho})}{\bar u_{i,\rho}}+
 \frac{\lm_{i,\rho}}{2}
 \left(
 \norm{\bar u_{i,\rho}}_\LLL^2+
 \scal{\nabla_\LLL \hh_{1,\rho}(\bar u_{i,\rho})}{\bar u_{i,\rho}}_\LLL
 \right)=
 \\
 \underbrace{
 \hh_{\rho}(\bar u_{i,\rho})-
 \frac{\scal{\nabla \hh_{\rho}(\bar u_{i,\rho})}{\bar u_{i,\rho}}}{2}
 }_{:=C_1(\rho)}+
 \lm_{i,\rho}
  \left(
  1+
  \underbrace{
 \left(
 \frac{\scal{\nabla_\LLL \hh_{1,\rho}(\bar u_{i,\rho})}{\bar u_{i,\rho}}_\LLL}{2}-
 \hh_{1,\rho}(\bar u_{i,\rho})
 \right)
 }_{:=C_2(\rho)}
  \right)
\end{multline*}
By using \eqref{eqn:estimates-h-h1} we obtain $C_1(\rho)\to0$,  $C_2(\rho)\to0$, so
for $\rho(\eps)$ small enough we have $|\lm_{i,\rho}-\hat\lm|<4\eps$. We have thus proven that $\lm_{1,\rho}\to\hat\lm$ as $\rho\to0$. Let $u_{i,\rho}:=\rho\bar u_{1,\rho}$. Clearly $u_{i,\rho}\xrightarrow{\LLL}0$ as $\rho\to0$.  By multiplying \eqref{eqn:equation-rho} by $\rho$ and using the 
definitions of $\hh_\rho$ and $\hh_{1,\rho}$ we get that $(u,\lm)=(u_{i,\rho},\lm_{i,\rho})$ verify \eqref{eqn:abstract-bifurcation-equation}.
Taking the scalar product with $u_{i,\rho}$ in \eqref{eqn:abstract-bifurcation-equation}
gives $\scal{Au_{i,\rho}}{u_{i,\rho}}\to0$. Then, by 
\eqref{eqn:coercitivity}, we have $u_{i,\rho}\xrightarrow{\HHH}0$.
\end{proof}

\section{A global bifurcation result for radial solutions}

We consider the case $N=2$ and $\Omega=B(0,R)=\set{x\in\real^2\st\|X\|<R}$. We look for radial solutions for Problem \eqref{eqn:z-neumann-problem}, i.e. $z(x,y)=w(\|(x,y)\|)$.
Actually with similar arguments we could have considered the 
general case $N\geq2$.
Given $R>0$, it is therefore convenient to introduce the Hilbert space: 
  \begin{equation*}
  E:=\set{w:[0,R]\to\real\st\int_0^R\rho\dot{w}^2\, d\rho<+\infty}
  \end{equation*}
  endowed with
  $\displaystyle{(v,w)_E:=\int_0^R\rho\dot v\dot w  \,d\rho+\int_0^R\rho v w  \,d\rho}$ and for $\lm>0$ the set:
  \begin{equation*}
   W_\lm:=\set{w\in E\st 1+\sqrt{\lm}w(\rho)>0},\qquad
   \mathcal{W}:=\set{(w,\lm)\in\real\times E\st \lm>0, w\in W_\lm}.
  \end{equation*}
  It is clear that $\|w\|_\infty\leq C\|w\|_E$, for a suitable constant $C$, so $W$ is open in $E$ and $\mathcal{W}$ is open in $\real\times E$. 
As well known the search for radial solutions leads the equation:
\begin{equation}\label{eqn:radial-equation}
\left\{
 \begin{aligned}
 &\ddot{w}+\frac{\dot{w}}{\rho}=-\lm w-\frac{\lm w}{1+\sqrt{\lm}w}=:f_\lm(w),
 \\
 &\dot{w}(0)=\dot{w}(R)=0.
 \end{aligned}
 \right.
\end{equation}
By the above we mean that:
\begin{equation}\label{eqn:radial-equation-weak}
 (w,\lm)\in\mathcal{W},
 \qquad
 \int_0^R\rho\dot{w}\dot{\delta}\,d\rho=
 \int_0^R \rho f_\lm(w)\delta\,d\rho
 \quad
 \forall v\in E.
\end{equation}
It is standard to check that ``weak solutions'', i.e. solutions
to \eqref{eqn:radial-equation-weak} actually solve \eqref{eqn:radial-equation} in a classical sense.

It is clear that $(0,\lm)$ is a solution for \eqref{eqn:radial-equation} for any $\lm\in\real$. We call ``nontrivial '' solution
a pair $(w,\lm)$ with $w\neq0$ such that \eqref{eqn:radial-equation} 
holds.

\begin{rmk}
 If $(w,\lm)$ is a nontrivial solution then $\lm>0$. To see this
 it suffices to multiply \eqref{eqn:radial-equation} by $u$ and
 integrate over $[0,R]$. Actually this property is true in the 
 general case (not just in the radial problem).
\end{rmk}
We shall use the following simple inequality.
\begin{rmk}\label{rmk:inequality-logarithm} 
 Let $0<a<b<+\infty$. We have:
 \begin{equation}\label{eqn:inequality-logarithm} 
  \frac{b-a}{b}\leq\ln\left(\frac{b}{a}\right)\leq\frac{b-a}{a}.
 \end{equation}
 We have indeed:
 \begin{equation*}
  \ln\left(\frac{b}{a}\right)=
  \ln\left(1+\frac{b-a}{a}\right)\leq\frac{b-a}{a}
 \end{equation*}
 and\begin{equation*}
  \ln\left(\frac{b}{a}\right)=
  -\ln\left(\frac{a}{b}\right)=
  -\ln\left(1+\frac{a-b}{b}\right)\geq
  -\frac{a-b}{b}=\frac{b-a}{b}.
 \end{equation*}
\end{rmk}

\begin{figure}[hb]
\centerline{
 \includegraphics[height=3cm]{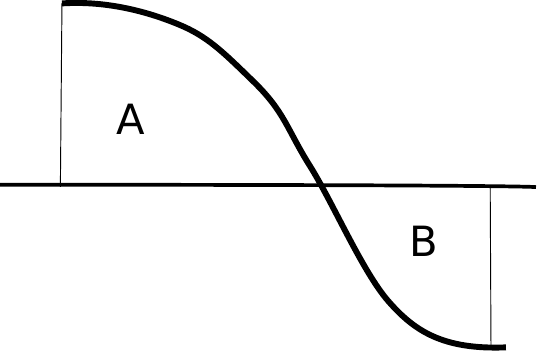}
\qquad\qquad
 \includegraphics[height=3cm]{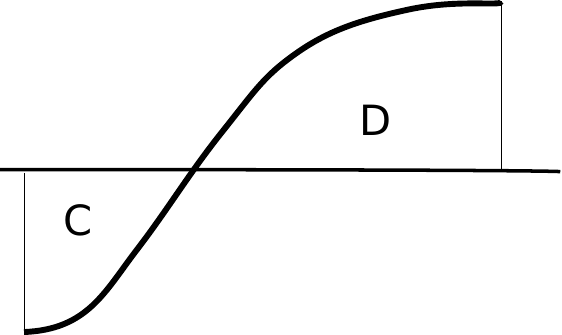}
}
\caption{The different cases}
\label{figureABCD}
\end{figure}

Now let us suppose  that a solution $(w,\lm)$ exists
so we can find some properties and estimates on $w$.
Arguing as in the proof of Lemma 2.2 in \cite{CrandallRabinowitzSturmLiou1970} we have  that either $w=0$ or $[0,R]$ can be split as the union of
a finite number of subintervals $[r_{1,i},r_{2,i}]$,
$i=1\dots,k$, where $w$ has one of the following behaviors (see figure \ref{figureABCD}, we are skipping the index $i$): 
\begin{description}
 \item[(A)] $w(r_1)>0$, $\dot{w}(r_1)=0$,
$\dot{w}<0$ in $]r_1,r_2]$, and $w(r_2)=0$; 
 \item[(B)] $w(r_1)=0$,
$\dot{w}<0$ in $[r_1,r_2[$, $\dot{w}(r_2)=0$, and $w(r_2)<0$; 
 \item[(C)] $w(r_1)<0$, $\dot{w}(r_1)=0$,
$\dot{w}>0$ in $]r_1,r_2]$, and $w(r_2)=0$; 
 \item[(D)] $w(r_1)=0$,
$\dot{w}>0$ in $[r_1,r_2[$, $\dot{w}(r_2)=0$, and $w(r_2)>0$.
\end{description}

So let $w:[r_1,r_2]\to\real$ be as in one of the above cases.
Multiplying \eqref{eqn:radial-equation} by $\dot{w}$ gives
\begin{equation*}
 \frac{1}{2}\ddot{w}\dot{w}'+\frac{\dot{w}^2}{\rho}=\frac{d}{d\rho}F_\lm(w)
\end{equation*}
where 
\begin{equation*}
  F_\lm(s)=\ln(1+\sqrt{\lm}s)-\sqrt{\lm}s-\frac\lm2s^2.
\end{equation*}
Let $p:=\dot{w}^2$ the previous equation can be written as:
\begin{equation*}
 \frac{1}{2}\dot{p}+\frac{p}{\rho}=\frac{d}{d\rho}F_\lm(w)
\end{equation*}
which is equivalent to
\begin{equation*}
 \frac{d}{d\rho}(\rho^2p)=2\rho^2\frac{d}{d\rho}F_\lm(w)\rho^2=
 2\rho^2\frac{d}{d\rho}F_1\left(\sqrt{\lm}w\right).
\end{equation*}
We integrate  between $\rho_1$ and $\rho_2$, where 
$r_1\leq\rho_1\leq\rho_2\leq r_2$:
\begin{equation*}
  \rho_2^2p(\rho_2)- \rho_1^2p(\rho_1)=
  2\rho_2^2F_\lm(w(\rho_2))-2\rho_1^2F_\lm(w(\rho_1))-
  \int_{\rho_1}^{\rho_2}4\sm F_\lm(w(\sm))\,d\sm
\end{equation*}
Notice that $F_\lm$ is increasing on $\left]-\dfrac{1}{\sqrt{\lm}},0\right[$ and decreasing 
on $]0,+\infty[$, so:
\begin{equation*}
 \sm\mapsto F_\lm(w(\sm))\mbox{ is increasing (decreasing) in cases \thetag{A} and \thetag{C} (in cases \thetag{B} and \thetag{D})}
\end{equation*}
We hence get, in cases \thetag{A} and \thetag{C}:
\begin{equation*}
 -2(\rho_2^2-\rho_1^2)F_\lm(w(\rho_2))\leq
 -\int_{\rho_1}^{\rho_2}4\sm F_\lm(w(\sm))\,d\sm\leq 
 -2(\rho_2^2-\rho_1^2)F_\lm(w(\rho_1))
\end{equation*}
while in cases \thetag{B} and \thetag{D}:
\begin{equation*}
 -2(\rho_2^2-\rho_1^2)F_\lm(w(\rho_1))\leq
 -\int_{\rho_1}^{\rho_2}4\sm F_\lm(w(\sm))\,d\sm\leq 
 -2(\rho_2^2-\rho_1^2)F_\lm(w(\rho_2))
\end{equation*}
So in cases \thetag{A} and \thetag{C} we have:
\begin{equation}\label{eqn:inequality-AC}
 2\rho_1^2(F_\lm(w(\rho_2))-F_\lm(w(\rho_1))\leq
 \rho_2^2p(\rho_2)- \rho_1^2p(\rho_1)\leq
 2\rho_2^2(F_\lm(w(\rho_2))-F_\lm(w(\rho_1))
\end{equation}
and in cases \thetag{B} and \thetag{D}:
\begin{equation}\label{eqn:inequality-BD}
 2\rho_2^2(F_\lm(w(\rho_2))-F_\lm(w(\rho_1))\leq
 \rho_2^2p(\rho_2)- \rho_1^2p(\rho_1)\leq
 2\rho_1^2(F_\lm(w(\rho_2))-F_\lm(w(\rho_1))
\end{equation}
Now we estimate $w(\rho)$ - we need to take into account all the four cases \thetag{A},\thetag{B},\thetag{C},\thetag{D}.

Case \thetag{A}. 
We  rename $\bar\rho:=r_1$, $\rho_0:=r_2$ and let $h:=w(\bar\rho)>0$. We use \eqref{eqn:inequality-AC} with 
$\rho_1=\bar\rho$ and $\rho_2=\sm\in[\bar\rho,\rho_0]$:
\begin{equation*}
 2\bar\rho^2(F_\lm(w(\sm))-F_\lm(h))\leq
 \sm^2\dot{w}(\sm)^2\leq
 2\sm^2(F_\lm(w(\sm))-F_\lm(h)).
\end{equation*}
Then we take the square root and divide: 
\begin{equation*}
 \sqrt{2}\frac{\bar\rho}{\sm}\leq
 \frac{-\dot{w}(\sm)}{\sqrt{F_\lm(w(\sm))-F_\lm(h)}}\leq
 \sqrt{2}
\end{equation*}
and now we integrate
between $\bar\rho$ and $\rho\in[\bar\rho,\rho_0]$ getting:
\begin{equation*}
 \sqrt{2}\bar\rho\ln\left(\frac{\rho}{\bar\rho}\right)\leq
 -\Phi_{\lm,h}(w(\rho))+\Phi_{\lm,h}(h)\leq
 \sqrt{2}(\rho-\bar\rho)
\end{equation*}
where $\Phi_{\lm,h}:[0,h]\to\real$ is defined by:
\begin{equation*}
 \Phi_{\lm,h}(s):=\int_0^s\frac{d\xi}{\sqrt{F_\lm(\xi)-F_\lm(h)}}
\end{equation*}
(it is simple to check the the integral converges at $\xi=h$).  So we deduce:
\begin{equation*}
 \Phi_{\lm,h}^{-1}\left(\Phi_{\lm,h}(h)-\sqrt{2}\left(\rho-\bar\rho\right)\right)
 \leq w(\rho)\leq
 \Phi_{\lm,h}^{-1}\left(\Phi_{\lm,h}(h)-\sqrt{2}\bar\rho\ln\left(\frac{\rho}{\bar\rho}\right)\right)
\end{equation*}
which we prefer to write as
\begin{equation}\label{eq-inequality-case-AB}
 \Phi_{\lm,h}^{-1}\left(\Phi_{\lm,h}(h)+\sqrt{2}\left(\bar\rho-\rho\right)\right)
 \leq w(\rho)\leq
 \Phi_{\lm,h}^{-1}\left(\Phi_{\lm,h}(h)+\sqrt{2}\bar\rho\ln\left(\frac{\bar\rho}{\rho}\right)\right)
\end{equation}
In particular, taking $\rho=\rho_0$, which gives $w(\rho_0)=0$, 
(and using \eqref{eqn:inequality-logarithm}) we have:
\begin{equation}\label{eqn:estimate-interval-case-A}
\sqrt{2}\frac{\bar\rho}{\rho_0}(\rho_0-\bar\rho)\leq
 \sqrt{2}\bar\rho\ln\left(\frac{\rho_0}{\bar\rho}\right)\leq\Phi_{\lm,h}(h)\leq\sqrt{2}\left(\rho_0-\bar\rho\right).
\end{equation}
Moreover taking $\rho_1=\bar\rho$ and $\rho_2=\rho_0$ in \eqref{eqn:inequality-AC} we have:
\begin{equation}\label{eqn:estimate-derivative-case-A}
 \sqrt{2}\frac{\bar\rho}{\rho_0}\sqrt{-F_\lm(h))}\leq
 -\dot{w}(\rho_0)\leq
 \sqrt{2}\sqrt{-F_\lm(h))}
\end{equation}
 
Case \thetag{B}. 
We rename $\rho_0:=r_1$, $\bar\rho:=r_2$ and let $h:=w(\bar\rho)<0$.
We use \eqref{eqn:inequality-BD} with 
$\rho_1=\sm\in[\rho_0,\bar\rho]$ and $\rho_2=\bar\rho$:
\begin{equation*}
 2\bar\rho^2(F_\lm(h)-F_\lm(w(\sm)))\leq
 -\sm^2\dot{w}(\sm)^2\leq
 2\sm^2(F_\lm(h)-F_\lm(w(\sm))).
\end{equation*}
We change sign and proceed as in   case \thetag{A}:
\begin{equation*}
 2\sm^2(F_\lm(w(\sm))-F_\lm(h))\leq
 \sm^2\dot{w}(\sm)^2\leq
 2\bar\rho^2(F_\lm(w(\sm))-F_\lm(h))).
\end{equation*}
Take the square root and divide: 
\begin{equation*}
 \sqrt{2}\leq
 \frac{-\dot{w}(\sm)}{\sqrt{F_\lm(w(\sm))-F_\lm(h)}}\leq
 \sqrt{2}\frac{\bar\rho}{\sm}.
\end{equation*}
Integrate on $[\rho,\bar\rho_0]$:
\begin{equation*}
 \sqrt{2}(\bar\rho-\rho)\leq
 -\Phi_{\lm,h}(h)+\Phi_{\lm,h}(w(\rho))\leq
 \sqrt{2}\bar\rho\ln\left(\frac{\bar\rho}{\rho}\right)
\end{equation*}
defining
$\Phi_{\lm,h}:[h,0]\to\real$ as in case \thetag{A}.
Applying $\Phi_{\lm,h}^{-1}$ we get that 
\eqref{eq-inequality-case-AB} holds in case \thetag{B} too.
In particular, taking $\rho=\rho_0$ (and using \eqref{eqn:inequality-logarithm}):
\begin{equation}\label{eqn:estimate-interval-case-B}
 \sqrt{2}(\bar\rho-\rho_0)\leq
 -\Phi_{\lm,h}(h)\leq
 \sqrt{2}\bar\rho\ln\left(\frac{\bar\rho}{\rho_0}\right)\leq
 \sqrt{2}\frac{\bar\rho}{\rho_0}(\bar\rho-\rho_0)
\end{equation}
and taking $\rho_1=\rho_0$ and $\rho_2=\bar\rho$ in \eqref{eqn:inequality-BD} we have:
\begin{equation}\label{eqn:estimate-derivative-case-B}
 \sqrt{2}\sqrt{-F_\lm(h))}\leq
 -\dot{w}(\rho_0)\leq
 \sqrt{2}\frac{\bar\rho}{\rho_0}\sqrt{-F_\lm(h))}
\end{equation}

Case \thetag{C}. 
We rename $\bar\rho:=r_1$, $\rho_0:=r_2$ end let $h:=w(\bar\rho)<0$.
Using \eqref{eqn:inequality-AC} with 
$\rho_1=\bar\rho$ and $\rho_2=\sm\in[\bar\rho,\rho_0]$ we obtain the same
inequality of case \thetag{A}. 
After taking the square root and dividing: 
\begin{equation*}
 \sqrt{2}\frac{\bar\rho}{\sm}\leq
 \frac{\dot{w}(\sm)}{\sqrt{F_\lm(w(\sm))-F_\lm(h)}}\leq
 \sqrt{2}.
\end{equation*}
We integrate
between $\bar\rho$ and $\rho\in[\bar\rho,\rho_0]$ getting:
\begin{equation*}
 \sqrt{2}\bar\rho\ln\left(\frac{\rho}{\bar\rho}\right)\leq
 \Phi_{\lm,h}(w(\rho))-\Phi_{\lm,h}(h)\leq
 \sqrt{2}(\rho-\bar\rho)
\end{equation*}
with $\Phi_{\lm,h}:[h,0]\to\real$ defined as above. So we deduce:
\begin{equation}\label{eq-inequality-case-CD}
 \Phi_{\lm,h}^{-1}\left(\Phi_{\lm,h}(h)+\sqrt{2}\bar\rho\ln\left(\frac{\rho}{\bar\rho}\right)\right)
 \leq w(\rho)\leq
 \Phi_{\lm,h}^{-1}\left(\Phi_{\lm,h}(h)+\sqrt{2}\left(\rho-\bar\rho\right)\right)
\end{equation}
In particular, taking $\rho=\rho_0$ (and using \eqref{eqn:inequality-logarithm}):
\begin{equation}\label{eqn:estimate-interval-case-C}
 \sqrt{2}\frac{\bar\rho}{\bar\rho_0}(\rho_0-\bar\rho)\leq
 \sqrt{2}\bar\rho\ln\left(\frac{\rho_0}{\bar\rho}\right)\leq-\Phi_{\lm,h}(h)\leq\sqrt{2}\left(\rho_0-\bar\rho\right).
\end{equation}
Moreover taking $\rho_1=\bar\rho$ and $\rho_2=\rho_0$ in \eqref{eqn:inequality-AC}  we have:
\begin{equation}\label{eqn:estimate-derivative-case-C}
 \sqrt{2}\frac{\bar\rho}{\rho_0}\sqrt{-F_\lm(h))}\leq
 \dot{w}(\rho_0)\leq
 \sqrt{2}\sqrt{-F_\lm(h))}
\end{equation}
 
Case \thetag{D}.
We rename $\rho_0:=r_1$, $\bar\rho:=r_2$ and let $h:=w(\bar\rho)>0$. 
Using \eqref{eqn:inequality-BD} with 
$\rho_1=\sm\in[\rho_0,\bar\rho]$ and $\rho_2=\bar\rho$
we obtain the same inequalities of case \thetag{B}. When we take the square root and divide:
\begin{equation*}
 \sqrt{2}\leq
 \frac{\dot{w}(\sm)}{\sqrt{F_\lm(w(\sm))-F_\lm(h)}}\leq
 \sqrt{2}\frac{\bar\rho}{\sm}.
\end{equation*}
Integrate on $[\rho,\bar\rho_0]$:
\begin{equation*}
 \sqrt{2}(\bar\rho-\rho)\leq
 \Phi_{\lm,h}(h)-\Phi_{\lm,h}(w(\rho))\leq
 \sqrt{2}\bar\rho\ln\left(\frac{\bar\rho}{\rho}\right)
\end{equation*}
with the usual definition of
$\Phi_{\lm,h}:[h,0]\to\real$. Applying $\Phi_{\lm,h}^{-1}$ we obtain that
\eqref{eq-inequality-case-CD} holds in case \thetag{D} too.
In particular, taking $\rho=\rho_0$ (and using \eqref{eqn:inequality-logarithm}):
\begin{equation}\label{eqn:estimate-interval-case-D}
 \sqrt{2}\left(\bar\rho-\rho_0\right)\leq\Phi_{\lm,h}(h)\leq\sqrt{2}\bar\rho\ln\left(\frac{\bar\rho}{\bar\rho_0}\right)\leq
 \sqrt{2}\frac{\bar\rho}{\rho_0}(\bar\rho-\rho_0)
\end{equation}
and taking $\rho_1=\rho_0$ and $\rho_2=\bar\rho_0$ in \eqref{eqn:inequality-BD} we have:
\begin{equation}\label{eqn:estimate-derivative-case-D}
 \sqrt{2}\sqrt{-F_\lm(h))}\leq
 \dot{w}(\rho_0)\leq
 \sqrt{2}\frac{\bar\rho}{\rho_0}\sqrt{-F_\lm(h))}
\end{equation}

Now we have:
\begin{multline*}
 \sqrt{2}\Phi_{\lm,h}(h)=
 \\
 \int_0^h\frac{d\xi}{\sqrt{F(\sqrt{\lm}\xi)-F(\sqrt{\lm}h)}}=
 \int_0^1\frac{h\,d\sm}{\sqrt{F(\sm\sqrt{\lm}h)-F(\sqrt{\lm}h)}}=
 \frac{1}{\sqrt{\lm}}\bar\Phi(\sqrt{\lm}h)
\end{multline*}
where:
\begin{equation*}
 \bar\Phi(s):=\int_0^1\frac{s\,d\sm}{\sqrt{F(\sm s)-F(s)}}=
 \mathrm{sgn}(s)\int_0^1\sqrt{\frac{s^2}{F(\sm s)-F(s)}}\,d\sm.
\end{equation*}

With simple computations:
\begin{equation*}
 \lim_{s\to0}\frac{s^2}{F(\sm s)-F(s)}=\frac{1}{1-\sm^2},
 \qquad
 \lim_{s\to+\infty}\frac{s^2}{F(\sm s)-F(s)}=\frac{2}{1-\sm^2},
\end{equation*}
and
\begin{equation*}
 \lim_{s\to-1^-}\frac{s^2}{F(\sm s)-F(s)}=0
\end{equation*}
So we deduce that (see figure \ref{figure:graph}):
\begin{gather}\label{eqn:limits-Phi-positive}
 \lim_{h\to0^+}\Phi_{\lm,h}(h)=
 \frac{\pi}{2\sqrt{2\lm}}
 ,\quad
 \lim_{h\to+\infty}\Phi_{\lm,h}(h)=
 \frac{\pi}{2\sqrt{\lm}},
 \\
 \label{eqn:limits-Phi-negative}
 \lim_{h\to0^-}\Phi_{\lm,h}(h)=
 -\frac{\pi}{2\sqrt{2\lm}}
 ,\quad
 \lim_{h\to-1^+}\Phi_{\lm,h}(h)=0.
\end{gather}

\begin{figure}[hb]
 \centerline{
 \includegraphics[height=5cm]{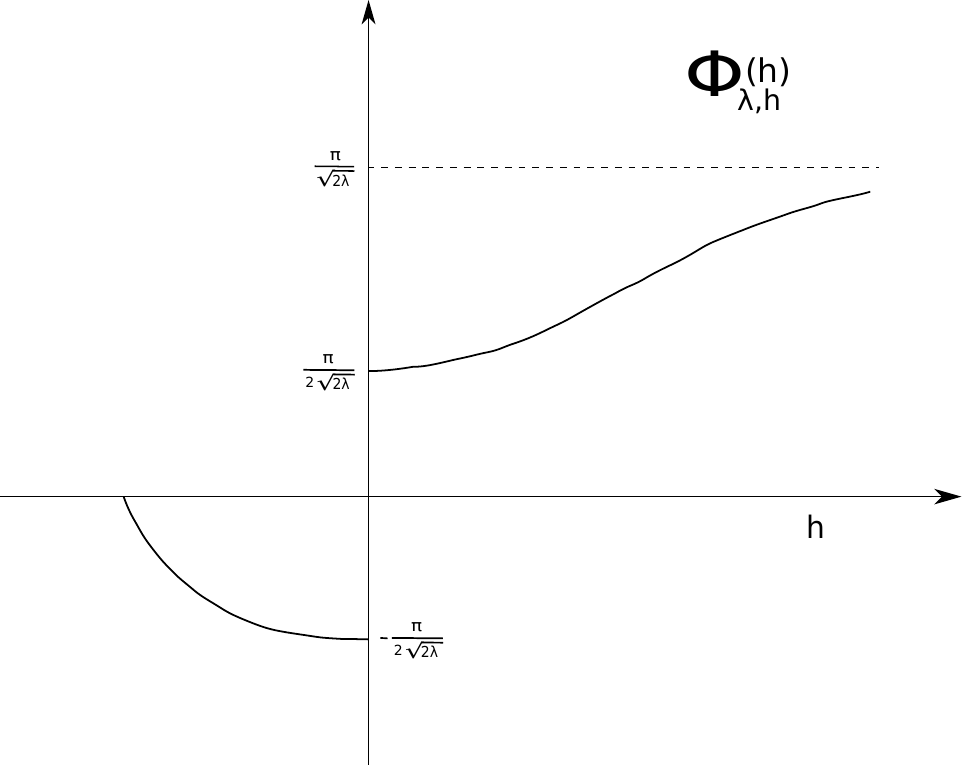}
 }
 \caption{Graph of $\Phi_{\lm,h}(h)$}
 \label{figure:graph}
\end{figure}

  To state the main result we need some notation, which we take
  from \cite{CrandallRabinowitzSturmLiou1970, RabinowitzSturmLiouville70}. For $k\in\nat$, $k\geq1$, we consider
  \begin{gather*}
   \mathcal{S}:=\set{(w,\lm)\in\mathcal{W}\st(w,\lm)\mbox{ is a solution to \eqref{eqn:radial-equation}}}
   \\
   \mathcal{S}^+_k:=\set{(w,\lm)\in\mathcal{S}\st\mbox{ $w$ has $k$ nodes in $]0,R[$}, w(0)>0},
   \\
   \mathcal{S}^-_k:=\set{(w,\lm)\in\mathcal{S}\st\mbox{ $w$ has $k$ nodes in $]0,R[$}, w(0)<0},
  \end{gather*}
  We also consider the two eigenvalue problems:
\begin{equation}\label{eqn:radial-eigenvalue-problem}
 \ddot{w}+\frac{\dot{w}}{\rho}=-\mu w,
 \qquad
 \dot{w}(0)=\dot{w}(R)=0.
\end{equation}
\begin{equation}\label{eqn:radial-eigenvalue-problem-zero}
 \ddot{v}+\frac{\dot{v}}{\rho}=-\nu v,
 \qquad
 \dot{v}(0)=0,{v}(R)=0.
\end{equation}
It is clear that $w\neq0$ and $\mu\neq0$  solve \eqref{eqn:radial-eigenvalue-problem} if and only if, for some integer $k\geq1$:
\begin{equation}\label{eqn:radial-eigenvalues}
 \mu=\mu_k:=\left(\frac{y_k}{R}\right)^2
\end{equation}
where $y_k$ denotes the $k$-th nontrivial zero of $J_0'$ and $J_0$ is the first Bessel function, and

\begin{equation}\label{eqn:radial-eigenfunctions}
 w=\alpha w_k,\quad\alpha\in\real,\qquad
 w_k(\rho):=J_0\left(\frac{y_k}{R}\rho\right).
\end{equation}
For the sake of completeness we can agree that $\mu_0=0$ and $w_0(\rho)=J_0(0)$. In the same way
$v\neq0$ and $\nu$  solve \eqref{eqn:radial-eigenvalue-problem-zero} 
if and only if, for some integer $k\geq1$:
\begin{equation}\label{eqn:radial-eigenvalues-zero}
 \nu=\nu_k:=\left(\frac{z_k}{R}\right)^2
\end{equation}
where $z_k$ is the $k$-th zero of $J_0$ and

\begin{equation}\label{eqn:radial-eigenfunctions-zero}
 v=\alpha v_k,\quad\alpha\in\real,\qquad
 v_k(\rho):=J_0\left(\frac{z_k}{R}\rho\right).
\end{equation}
Notice that $\nu_k<\mu_k<\nu_{k+1}$ for all $k$.

\begin{thm}\label{thm:global-bifurvation-radial}
 Let $\mu_k>0$ be an eigenvalue for \eqref{eqn:radial-eigenvalue-problem}. Then $\mathcal{S}_k^+$ is a  connected set  and
 \begin{itemize}
  \item $(0,\mu_k/2)\in\overline{\mathcal{S}_k^+}$;
  \item 
  $\displaystyle{
    0<
    \inf\set{\lm\in\real\st\exists w\in E\mbox{ with }(w,\lm)\in\mathcal{S}_k^+}
    }$;
  \item
    $\displaystyle{
    \sup\set{\lm\in\real\st\exists w\in E\mbox{ with }(w,\lm)\in\mathcal{S}_k^+}
    <+\infty
    }$;
  \item
   $\mathcal{S}_k^+$ is unbounded and contains a sequence $(w_n,\lm_n)$
   such that $\|w_n\|_E\to\infty$ and 
   \begin{equation}\label{eqn:limit-of-the-branch}
    \lim_{n\to\infty}\lm_n=
    \begin{cases}
     \mu_{k/2}&\mbox{ if $k$ is even},
     \\
     \nu_{(k+1)/2}&\mbox{ if $k$ is odd}.
    \end{cases}
   \end{equation}
   
 \end{itemize}
\end{thm}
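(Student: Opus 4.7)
The plan is to extend the local bifurcation of Theorem~\ref{thm-local-bufurcation} into a global branch by a Rabinowitz-type continuation, exploiting the a priori estimates \eqref{eqn:estimate-interval-case-A}--\eqref{eqn:estimate-derivative-case-D} together with the limits \eqref{eqn:limits-Phi-positive}--\eqref{eqn:limits-Phi-negative} to propagate the nodal structure and to identify the asymptotic eigenvalue.

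First I would apply Theorem~\ref{thm-local-bufurcation} at the simple radial Neumann eigenvalue $\mu_k$. Since the eigenfunction $w_k$ has exactly $k$ interior nodes with $w_k(0)>0$, the two local branches of Theorem~\ref{thm-local-bufurcation} must track $\pm w_k$ to leading order, so one of them lies in $\mathcal{S}_k^+$ for small $\rho$, giving $(0,\mu_k/2)\in\overline{\mathcal{S}_k^+}$. Let $\mathcal{C}$ denote the connected component of $\overline{\mathcal{S}_k^+}$ (closure in $E\times\real$) containing $(0,\mu_k/2)$. The central technical step is node-count preservation along $\mathcal{C}$: if $(w_n,\lm_n)\in\mathcal{S}_k^+$ and $(w_n,\lm_n)\to(w^*,\lm^*)$ in $\mathcal{W}$, then either $w^*\in\mathcal{S}_k^+$, or $w^*\equiv 0$ with $\lm^*=\mu_k/2$. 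Classical ODE uniqueness rules out interior node collisions (a double zero forces $w^*\equiv 0$, and then the local analysis at any other eigenvalue $\mu_j/2$ is incompatible with a preserved node count unless $j=k$), while the lower bound $\mathrm{length}\geq\Phi_{\lm,h}(h)/\sqrt{2}\geq\pi/(4\sqrt{\lm})$ on any positive sub-interval (from \eqref{eqn:estimate-interval-case-A}, \eqref{eqn:estimate-interval-case-C}, \eqref{eqn:estimate-interval-case-D}) prevents nodes from escaping to $\{0,R\}$.

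Next I would bound $\lm$ on $\mathcal{C}$. For the lower bound, summing the lengths of the $\lfloor k/2\rfloor+1$ positive sub-intervals, each $\geq\pi/(4\sqrt{\lm})$ by \eqref{eqn:limits-Phi-positive}, gives $R\geq Ck/\sqrt{\lm}$, hence $\inf\lm>0$. For the upper bound I would use the rescaling $v(s):=\sqrt{\lm}\,w(s/\sqrt{\lm})$, which recasts \eqref{eqn:radial-equation} in $\lm$-independent form on $[0,\sqrt{\lm}R]$; each node of $v$ consumes a universal amount of arc-length, so with $k$ fixed $\sqrt{\lm}R$ cannot blow up, giving $\sup\lm<+\infty$. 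Since no other bifurcation point $(0,\mu_j/2)$ with $j\neq k$ belongs to $\overline{\mathcal{C}}$ by node preservation, the classical Rabinowitz alternative forces $\mathcal{C}$ to be unbounded; because $\lm$ is bounded this means some sequence $(w_n,\lm_n)\in\mathcal{C}$ has $\|w_n\|_E\to+\infty$. The connectedness of $\mathcal{S}_k^+$ then follows from $\mathcal{C}\cap\mathcal{S}_k^+=\mathcal{S}_k^+$, which is a consequence of the local implicit function theorem applied at each $(w,\lm)\in\mathcal{S}_k^+$.

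Finally I would identify $\lim\lm_n$ sub-interval by sub-interval. Passing to a subsequence I may assume $\lm_n\to\lm^*$; on each positive sub-interval the peak $h_n\to+\infty$, so by \eqref{eqn:limits-Phi-positive} the length tends to $\pi/(2\sqrt{2\lm^*})$, while on each negative sub-interval the valley tends to $-1/\sqrt{\lm^*}^+$ and by \eqref{eqn:limits-Phi-negative} the length tends to $0$. For even $k$ the $k/2+1$ positive bumps join at $k/2$ interior zeros with Neumann data at both endpoints of $[0,R]$, matching the Neumann eigenfunction condition and forcing $\lm^*=\mu_{k/2}$; for odd $k$ the final negative sub-interval collapses onto $\{R\}$, producing a limit profile with $\dot w(0)=0$ and $w(R)=0$, so $\lm^*=\nu_{(k+1)/2}$. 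The hard part will be this last step: because the convergence is non-uniform (negative sub-intervals concentrate at points), matching the limit at the shrinking junctions requires combining the derivative estimates \eqref{eqn:estimate-derivative-case-A}--\eqref{eqn:estimate-derivative-case-D} with a careful gluing argument to identify $\lm^*$ as an eigenvalue of the appropriate linear problem.
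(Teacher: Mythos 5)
Your outline captures the right high-level strategy (Rabinowitz-type global continuation, a priori estimates on nodal intervals, and asymptotic identification of the limiting eigenvalue), but it differs from the paper's route and has one genuine gap that the paper's machinery is specifically designed to plug.

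The main difficulty you gloss over is that the nonlinearity $f_\lm(w)$ in \eqref{eqn:radial-equation} is singular: it is only defined on the open set $\mathcal{W}$, and the classical Rabinowitz alternative (unbounded branch or return to the trivial line at another eigenvalue) is formulated for operators continuous on the whole ambient space. Applied naively on $\mathcal{W}$, the alternative gains a third possibility: the branch may escape by approaching $\partial\mathcal{W}$, i.e.\ $\inf_\rho\bigl(1+\sqrt{\lm}\,w(\rho)\bigr)\to 0$, without the $E$-norm blowing up. You need to exclude or absorb this case before you can claim ``$\mathcal{C}$ unbounded $\Rightarrow$ $\|w_n\|_E\to\infty$.'' The paper does exactly this: it truncates $h_\lm$ to $\tilde h_{\eps,\lm}$ (so that Rabinowitz's nonlinear Sturm--Liouville theorem from \cite{RabinowitzSturmLiouville70} applies globally), runs the global branch until it leaves a nested open set $\mathcal{O}_\eps$ on whose boundary the truncation is inactive, and then sends $\eps\to 0$. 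The crucial Lemma \ref{lma:main-lemma-sequences} establishes the equivalence (a)$\Leftrightarrow$(b) precisely so that ``hitting the singular boundary'' and ``$\sup w_n\to+\infty$'' are interchangeable along $\mathcal{S}_k^+$; without that lemma your appeal to the alternative is incomplete. Note also that the paper does not invoke Theorem \ref{thm-local-bufurcation} here at all: Rabinowitz's ODE theorem already supplies the branch with the prescribed nodal count, so there is no need to argue separately that the abstract local branches ``track $\pm w_k$'' (a claim that would itself require proof, since Theorem \ref{thm-local-bufurcation} lives in $W^{1,2}(\Omega)$ and says nothing directly about radial nodal structure).

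Two secondary remarks. Your rescaling argument for $\sup\lm<+\infty$ is a legitimate alternative to Lemma \ref{eqn:estimate-lambda-on-solutions}, but as stated it is heuristic (``each node consumes a universal amount of arc-length''); the paper's comparison against the first mixed eigenvalue $\bar\mu(r_1,r_2)$ gives the two-sided bound $\bar\mu/2\le\lm\le\bar\mu$ cleanly. For the asymptotic identification \eqref{eqn:limit-of-the-branch}, you correctly diagnose the difficulty of the ``shrinking negative bumps,'' but the actual resolution in the paper is not an elementary gluing of derivative data at junctions; it is the normalization $\hat w_n=w_n/\|w_n\|_\infty$, extraction of a weak limit, and the sign-flipping rearrangement $\tilde w=\sum_j(-1)^j\alpha_j\hat w\,\uno_{[\rho_j,\rho_{j+1}]}$ with matched one-sided derivatives, which reconstitutes an eigenfunction of \eqref{eqn:radial-eigenvalue-problem} or \eqref{eqn:radial-eigenvalue-problem-zero} depending on the parity of $k$. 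You anticipated this was ``the hard part''; it is indeed the content of Lemma \ref{lma:main-lemma-sequences}, and your proposal would need to supply it.
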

Figure \ref{figure:bifur-diag} somehow illustrates Theorem \eqref{thm:global-bifurvation-radial}.
  \begin{figure}[hb]
  \centerline{
    \includegraphics[height=5cm]{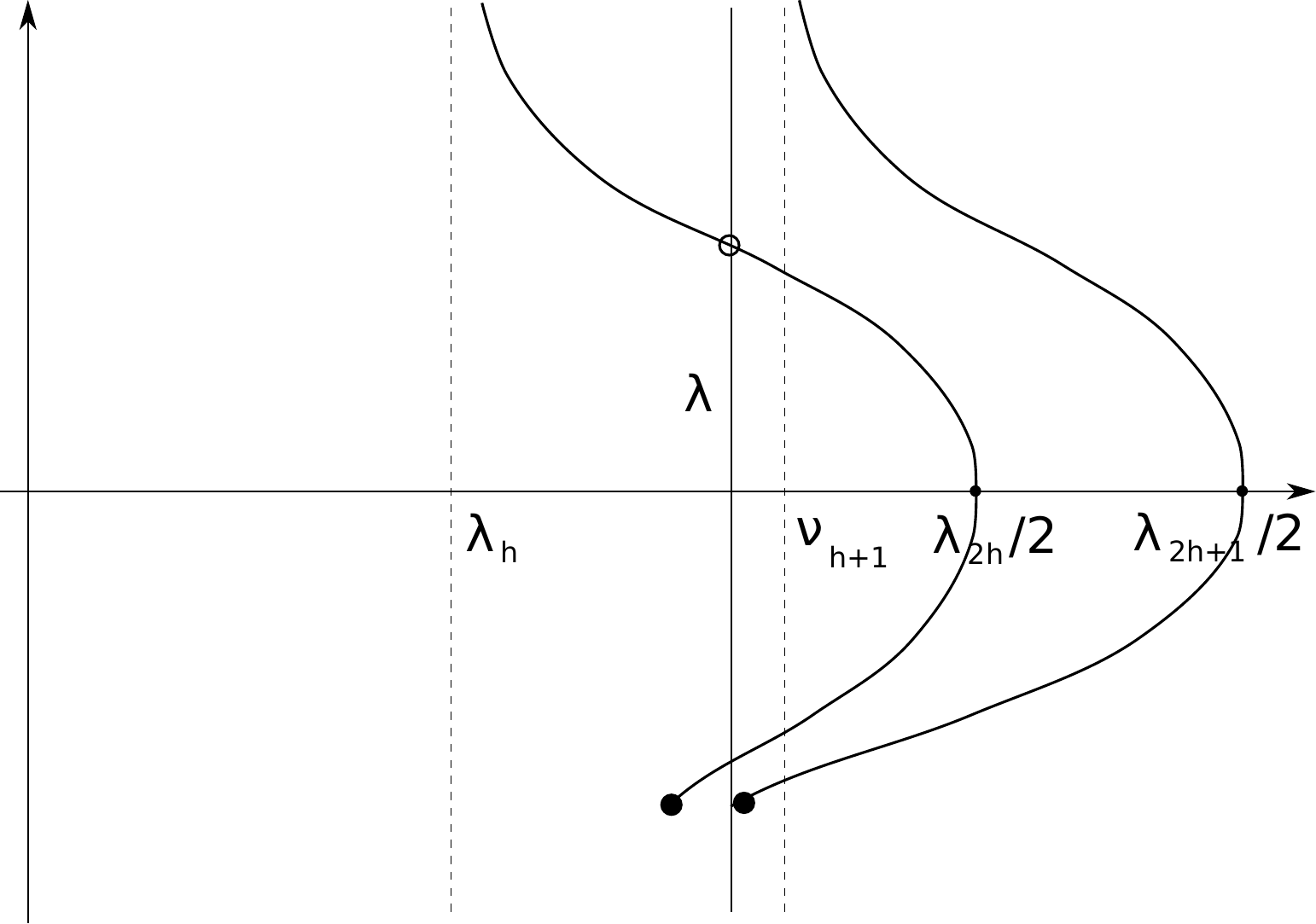}
    }
    \caption{Bifurcation diagram}
    \label{figure:bifur-diag}
  \end{figure}

  The proof of \eqref{thm:global-bifurvation-radial}
  will be obtained from some preliminary statements.
  \begin{rmk}
   If $(w,\lm)\in\mathcal{S}^+$ ( resp. $(w,\lm)\in\mathcal{S}^+$),
   and $0=\rho_0<\rho_1,\dots,\rho_k<\rho_{k+1=R}$, $\rho_1,\dots,\rho_k$ being the nodal points of $w$, then:
   \begin{equation}\label{eqn:estimate-positive-intervals}
    \rho_{i+1}-\rho_i\geq\ (\,\leq\,)\ 
    \frac{\pi}{4\sqrt{\lm}}
    \ \mbox{for $i$ even}
    \quad(\mbox{resp. for $i$ odd}).
   \end{equation}
  This is easily seen using the right hand sides of the inequalities \eqref{eqn:estimate-interval-case-A},
  \eqref{eqn:estimate-interval-case-C},
  and \eqref{eqn:limits-Phi-positive}.
  \end{rmk}

\begin{lma}
  For any integer $k$ there esist two constants $\underline{\lm}_k$
  and $\overline{\lm}_k$ such that
  \begin{equation}\label{eqn:estimate-lambda-on-solutions}
   (w,\lm)\in\mathcal{S}_k^+\cup\mathcal{S}_k^-
   \Rightarrow
   0<\underline{\lm}_k\leq\lm\leq\overline{\lm}_k<+\infty
  \end{equation}
  \end{lma}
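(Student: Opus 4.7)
\emph{Plan.} I would split the proof into the two bounds, which require quite different tools.

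For the \emph{lower bound}, the remark immediately preceding the lemma is exactly what is needed. If $(w,\lm)\in\mathcal{S}_k^+$, every positive nodal interval $[\rho_i,\rho_{i+1}]$ (the ones with $i$ even) satisfies $\rho_{i+1}-\rho_i\geq\pi/(4\sqrt\lm)$ by \eqref{eqn:estimate-positive-intervals}; if $(w,\lm)\in\mathcal{S}_k^-$ the same estimate applies to odd $i$. In either case there are at least $\lfloor(k+1)/2\rfloor\geq 1$ positive lobes, all packed into $[0,R]$, so
\[
 \lfloor(k+1)/2\rfloor\,\frac{\pi}{4\sqrt\lm}\;\leq\;R,
\]
whence $\lm\geq\bigl(\lfloor(k+1)/2\rfloor\,\pi/(4R)\bigr)^2=:\underline\lm_k>0$.

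For the \emph{upper bound}, I would rewrite \eqref{eqn:radial-equation} in Sturm--Liouville form
\[
 (\rho\dot w)' + \rho\,q(\rho)\,w = 0,\qquad
 q(\rho)=\lm+\frac{\lm}{1+\sqrt\lm\,w(\rho)},
\]
and compare it with the Bessel problem $(\rho\dot v)' + \lm\rho v = 0$ solved by $v(\rho):=J_0(\sqrt\lm\,\rho)$, which satisfies $\dot v(0)=0$ and has zeros $\rho^*_j = z_j/\sqrt\lm$. The admissibility condition $(w,\lm)\in\mathcal{W}$ forces $1+\sqrt\lm\,w>0$ on $[0,R]$, so $q(\rho)>\lm$ strictly. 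Two ingredients then count the zeros of $w$ from below. First, the Sturm--Picone comparison applied on each $(\rho^*_j,\rho^*_{j+1})$ produces a zero of $w$ in between. Second, the Wronskian identity
\[
 \bigl(\rho(\dot v\,w - v\,\dot w)\bigr)' \;=\; \rho\,(q-\lm)\,v\,w,
\]
combined with $\dot w(0)=\dot v(0)=0$ and the definite sign of $w(0)$ (positive for $\mathcal{S}_k^+$, negative for $\mathcal{S}_k^-$), yields an additional zero of $w$ in $(0,\rho^*_1]$: were $w$ of constant sign on that interval, the quantity $W:=\rho(\dot v w - v\dot w)$ would vanish at $\rho=0$ and be monotone on $(0,\rho^*_1)$, yet its endpoint value $W(\rho^*_1)=\rho^*_1\,\dot v(\rho^*_1)\,w(\rho^*_1)$ would carry the opposite sign. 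Letting $m$ denote the number of zeros of $v$ in $(0,R)$, $w$ thus has at least $m$ interior zeros in $(0,R)$; since $w$ has exactly $k$, we conclude $m\leq k$, i.e.\ $\sqrt\lm\,R\leq z_{k+1}$, giving $\lm\leq (z_{k+1}/R)^2=\nu_{k+1}=:\overline\lm_k<+\infty$.

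The main technical obstacle is the singularity of the equation at $\rho=0$: both the Sturm--Picone comparison and the Wronskian identity have to be justified across the singular endpoint. I plan to run both arguments on $[\varepsilon,R]$ and then send $\varepsilon\to 0^+$, using that $w,v$ and their derivatives are bounded near $0$ (indeed $\dot w,\dot v$ vanish there) so that the boundary contributions disappear in the limit. Once this is verified, the zero-counting is routine and the two constants $\underline\lm_k,\overline\lm_k$ obtained above do not depend on the particular $(w,\lm)\in\mathcal{S}_k^\pm$.
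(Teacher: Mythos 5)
Your proof is correct, but it takes a genuinely different route from the paper's, which obtains \emph{both} bounds from a single eigenvalue-comparison argument. The paper fixes one nodal interval $[r_1,r_2]$ of $w$, introduces the first eigenvalue $\bar\mu(r_1,r_2)$ of $-(\rho\dot w)'=\mu\rho w$ with the mixed boundary conditions inherited from $w$ (a Neumann condition at the critical endpoint, Dirichlet at the node), and tests \eqref{eqn:radial-equation} against the associated eigenfunction $\bar e$ chosen so that $w\bar e>0$; integration by parts kills the boundary terms and yields
\begin{equation*}
 \bar\mu\int_{r_1}^{r_2}\rho\,w\,\bar e\,d\rho
 =\lm\int_{r_1}^{r_2}\rho\,w\,\bar e\left(1+\frac{1}{1+\sqrt{\lm}\,w}\right)d\rho .
\end{equation*}
On a positive lobe the bracketed factor lies strictly between $1$ and $2$, so $\bar\mu/2\le\lm\le\bar\mu$; domain monotonicity of $\bar\mu$ then gives the lower bound $\lm\ge\bar\mu(0,R)/2$, and a pigeonhole choice of a lobe of length $\ge R/k$ gives the upper bound $\lm\le\sup_{b-a\ge R/k}\bar\mu(a,b)<\infty$. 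Your plan splits the two bounds and uses two separate tools: the lower bound follows from the quantitative lobe-width estimate \eqref{eqn:estimate-positive-intervals} (so it imports the whole $\Phi_{\lm,h}$ machinery built earlier in the section, which is perfectly legitimate), while the upper bound follows from a Sturm--Picone comparison of $w$ against $v(\rho)=J_0(\sqrt{\lm}\,\rho)$, supplemented by the Wronskian argument at the Neumann endpoint $\rho=0$ to pick up the extra zero that makes the count come out to $m\le k$, i.e.\ $\lm\le\nu_{k+1}$. Both approaches are valid; what yours buys is an explicit constant $\overline{\lm}_k=\nu_{k+1}$ rather than the paper's implicit supremum, and a conceptually cleaner separation (lobes cannot be too short; the oscillation frequency cannot exceed the Bessel frequency with $k$ interior zeros). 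Your handling of the singular endpoint --- running the Wronskian identity on $[\varepsilon,R]$ and letting $\varepsilon\to0^+$, using that the factor $\rho$ and the boundedness of $w,\dot w,v,\dot v$ near $0$ make the boundary contribution vanish --- is exactly the justification that is needed and is consistent with the regularity the paper assumes for solutions of \eqref{eqn:radial-equation}.
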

  \begin{proof}
  Take any subinterval $[r_1,r_2]$ as in cases
  \thetag{A}--\thetag{D} and consider the first eigenvalue
  $\bar\mu=\bar\mu(r_1,r_2)$ for the mixed type boundary condition:
  \begin{equation*}
   \left\{
   \begin{aligned}
   -&(\rho\dot{w})'=\mu w\quad\mbox{on }]r_1,r_2[
   \\
   &\dot{w}(r_1)=0,w(r_2)=0\qquad
   (\mbox{resp. }w(r_1)=0,\dot{w}(r_2)=0)
   \end{aligned}
   \right.
  \end{equation*}
  in cases \thetag{A},\thetag{C} (resp. cases \thetag{C},\thetag{D}). We can choose an eigenfunction $\bar e$ corresponding
  to $\bar\mu$ so that $z\bar e>0$ in 
  $]r_1,r_2[$. Multiplying  \eqref{eqn:radial-equation}
  by $\bar e$ and integrating over $[r_1,r_2]$ yields:
  \begin{equation*}
   \bar\mu\int_{r_1}^{r_2}\rho z\bar e\,d\rho=
   \lm\int_{r_1}^{r_2}\rho z\bar e\left(1+\frac{1}{1+\sqrt{\lm}z}\right)\,d\rho.
  \end{equation*}
  This implies:
  \begin{equation*}
   \lm\int_{r_1}^{r_2}\rho z\bar e\,d\rho\leq
   \bar\mu\int_{r_1}^{r_2}\rho z\bar e\,d\rho\leq
   2\lm\int_{r_1}^{r_2}\rho z\bar e\,d\rho
  \end{equation*}
  which gives
  \(
   \dfrac{\bar\mu}{2}\leq\lm\leq\bar\mu.
  \)
  Now since $]r_1,r_2[\subset]0,R[$ we have 
  $\bar\mu\geq\bar\mu[0,R]$. On the other side since $w$ has $k$
  nodal points we can choose $r_1$, $r_2$ such that 
  $r_2-r_1\geq R/k$, which implies 
  $\bar\mu\leq\sup\limits_{b-a=R/k}\bar\mu(a,b)<+\infty$.
  This proves \eqref{eqn:estimate-lambda-on-solutions}
  \end{proof}

  \begin{lma}\label{lma:main-lemma-sequences}
   Let $(w_n,\lm_n)$ be a sequence in $\mathcal{S}_k^+$.
   Then we can consider $0<\rho_{1,n}<\cdots<\rho_{k,n}<R$ to be the nodes of $w_n$ and set $\rho_{0,n}:=0$, $\rho_{k+1,n}:=R$;
   in ths way $wn(\rho)>0$ on $]\rho_1,\rho_{i+1}[$ if $i$ is even
   and $wn(\rho)<0$ on $]\rho_1,\rho_{i+1}[$ if $i$ is odd. 
   The following facts are equivalent:
   \begin{description}
    \item[(a)]
    \begin{equation*}
     \lim_{n\to\infty}\sup_{\rho\in[0,R]}w_n(\rho)=+\infty;
    \end{equation*}
    \item[(b)]
    \begin{equation*}
     \lim_{n\to\infty}\inf_{\rho\in[0,R]}(1+\lm_n w_n(\rho))=0;
    \end{equation*}
    \item[(c)]
    \begin{equation*}
     \lim_{n\to\infty}\sup_{\rho\in[\rho_{i,n},\rho_{i+1,n}]}w_n(\rho)=+\infty
     \quad\mbox{if $i$ is even};
    \end{equation*}
    \item[(d)]
    \begin{equation*}
     \lim_{n\to\infty}\inf_{\rho\in[\rho_{i,n},\rho_{i+1,n}]}(1+\lm_n w_n(\rho))=0
     \quad\mbox{if $i$ is odd};
    \end{equation*}
    \item[(e)]
    \begin{equation*}
     \lim_{n\to\infty}\rho_{1+1,n}-\rho_{i,n}=0
     \quad\mbox{if $i$ is odd};
    \end{equation*}
   \end{description}
   Moreover, if any of the above holds, then \eqref{eqn:limit-of-the-branch}  holds.
  \end{lma}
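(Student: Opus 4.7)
My plan is to first prove that the five conditions are equivalent, and then extract the limit of $\lm_n$ using the nodal structure. The implications (c) $\Rightarrow$ (a) and (d) $\Rightarrow$ (b) are immediate; their converses reduce, by pigeonhole on a subsequence, to showing that blow-up on a \emph{single} interval propagates to all intervals of the same parity. Throughout, $\lm_n$ is confined to the compact set $[\underline{\lm}_k,\overline{\lm}_k]$ of the previous lemma.

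The central mechanism is a cascade linking adjacent intervals through the derivative of $w_n$ at the shared node. Suppose on some even interval the peak $h_n \to +\infty$; since $F_{\lm}(s)\sim -\lm s^2/2$ at infinity and $\lm_n$ is bounded, $-F_{\lm_n}(h_n)\to +\infty$, and the estimates \eqref{eqn:estimate-derivative-case-A}--\eqref{eqn:estimate-derivative-case-D} force $|\dot w_n|\to +\infty$ at the bordering nodes. The same derivative value reappears at the endpoint of the adjacent odd interval, so by \eqref{eqn:estimate-derivative-case-B} we obtain $-F_{\lm_n}(h'_n)\to +\infty$ there, where $h'_n$ is the minimum on that odd interval. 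Since $F_{\lm_n}$ is bounded on compact subsets of $(-1/\sqrt{\lm_n},0]$, the only possibility is $1+\sqrt{\lm_n}\,h'_n\to 0^+$, which is (d). Then the second limit in \eqref{eqn:limits-Phi-negative} gives $\Phi_{\lm_n,h'_n}(h'_n)\to 0$, and \eqref{eqn:estimate-interval-case-B} forces the odd-interval length to collapse, giving (e). The large derivative at the opposite endpoint of this odd interval re-enters the next even interval via \eqref{eqn:estimate-derivative-case-D}, forcing its peak to infinity, and the cascade propagates in both directions. Every implication is reversible: a vanishing odd-interval length forces $\Phi_{\lm_n,h'_n}(h'_n)\to 0$ by \eqref{eqn:estimate-interval-case-B}, hence $h'_n\to -1/\sqrt{\lm_n}^+$ by \eqref{eqn:limits-Phi-negative}, and the derivative estimates propagate backwards to the bordering even interval. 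This closes (a) $\Leftrightarrow$ (b) $\Leftrightarrow$ (c) $\Leftrightarrow$ (d) $\Leftrightarrow$ (e).

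To identify the limit, fix a subsequence with $\lm_n\to\lm^*$ and $\rho_{i,n}\to\rho_i^*\in[0,R]$; by (e) the endpoints of every odd interval merge into a common limit. On each even interval $[a_n,b_n]$ with peak $h_n\to\infty$, set $\tilde w_n := w_n/h_n$; the singular nonlinearity $\lm_n w_n/(1+\sqrt{\lm_n}w_n)$ rescaled reads $\sqrt{\lm_n}\tilde w_n/(1+\sqrt{\lm_n}h_n\tilde w_n)$, uniformly $O(1/h_n)$ on each set where $\tilde w_n>\eps>0$. Standard ODE compactness then gives, on compact subsets of the limiting open interval $(a_\infty,b_\infty)$, smooth convergence $\tilde w_n\to\tilde w$ where $\tilde w$ solves the linear Bessel equation $\ddot{\tilde w}+\dot{\tilde w}/\rho=-\lm^*\tilde w$, with Dirichlet data at any interior merged node and Neumann data at $0$ or $R$. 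Since $\max\tilde w=1$, $\tilde w$ is a nontrivial nonnegative first eigenfunction of the corresponding mixed Bessel problem, so $\lm^*$ must equal its first eigenvalue. Demanding the same $\lm^*$ on every even piece, combined with the nodal count $k$, identifies the limit configuration uniquely: for $k$ even the $k/2$ merged-odd points produce the $(k/2)$-th Neumann eigenfunction $w_{k/2}$ of \eqref{eqn:radial-eigenfunctions}, hence $\lm^*=\mu_{k/2}$; for $k$ odd the last odd interval $[\rho_{k,n},R]$ collapses to $\{R\}$, producing the $((k+1)/2)$-th Dirichlet eigenfunction $v_{(k+1)/2}$ of \eqref{eqn:radial-eigenfunctions-zero}, hence $\lm^*=\nu_{(k+1)/2}$. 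The limit being the same for every subsequence yields \eqref{eqn:limit-of-the-branch}.

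The main obstacle is making the rescaled ODE limit rigorous up to the merging endpoints. One must ensure that the peak location $\bar\rho_n$ stays strictly inside its even interval so that $\tilde w$ is nontrivial, and that the Dirichlet data at each merged odd interval is consistent between the two adjoining even pieces. Both ingredients come from the two-sided estimates \eqref{eqn:estimate-interval-case-A}--\eqref{eqn:estimate-interval-case-D} together with the matching of $\dot w_n$ across shared nodes, but the case $k$ odd requires extra care: the rightmost node $\rho_{k,n}\to R$ through a shrinking last odd interval, and the Neumann condition at $R$ must be transferred consistently to the limit problem on the last even piece.
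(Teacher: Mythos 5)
Your proof of the equivalence chain (a)--(e) follows the same propagation mechanism as the paper: blow-up of the peak $h_{i,n}$ on one even interval forces, through the estimates \eqref{eqn:estimate-derivative-case-A}--\eqref{eqn:estimate-derivative-case-D}, divergence of $\dot{w}_n$ at the shared node, which then transfers to the adjacent odd interval and forces $1+\sqrt{\lm_n}\,h_{i\pm1,n}\to 0$; this is exactly the paper's cascade. You also correctly invoke the remark after Lemma 4.5 to ensure $\rho_{1,n}$ stays bounded below, which is what makes the ratio $\bar\rho/\rho_0$ in the odd-interval estimates controlled (this is also why the lemma is stated for $\mathcal{S}_k^+$ and not $\mathcal{S}_k^-$).

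Where you diverge from the paper is the identification of the limit $\lm^*$. The paper normalizes \emph{globally}, $\hat w_n := w_n/\|w_n\|_\infty$, passes to a weak limit $\hat w\geq 0$ supported on the union of the non-collapsed even intervals, and then explicitly reassembles a sign-changing eigenfunction $\tilde w := \sum_j (-1)^j\alpha_j\,\hat w\,\uno_{[\rho_j,\rho_{j+1}]}$ with the coefficients $\alpha_j$ chosen so that the one-sided derivatives match at each merged node $\rho_j$. This single formula produces in one stroke a genuine eigenfunction of \eqref{eqn:radial-eigenvalue-problem} (for $k$ even) or \eqref{eqn:radial-eigenvalue-problem-zero} (for $k$ odd) with the correct node count, so the eigenvalue is read off directly. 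You instead normalize each even piece by its own local peak $h_n$, obtain on each piece a first eigenfunction of a mixed Bessel problem, and then argue that ``demanding the same $\lm^*$ on every piece'' pins down the configuration. This is plausible and is morally a Sturm-oscillation argument, but it leaves two things unaddressed: (i) you never carry out the reassembly across merged nodes (matching of one-sided derivatives, or an equivalent monotonicity-in-intervals argument), which is precisely what the paper's $\alpha_j$ do; and (ii) the per-piece normalizations are different, so the compatibility of the pieces is not automatic the way it is with the global normalization. You candidly flag these gaps yourself in the last paragraph. The paper's global normalization is the cleaner route: it makes the compatibility automatic and reduces the reassembly to choosing finitely many scalars. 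If you want to keep your local approach, you should close it by either performing the alternating-sign gluing à la the paper, or by invoking monotone dependence of mixed Sturm--Liouville eigenvalues on the interval to force the merged nodes to coincide with the nodes of $w_{k/2}$ (resp.\ $v_{(k+1)/2}$).
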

  \begin{proof}
   We can assume, passing to a subsequence that 
   $\lm_n\to\hat\lm\in[\underline{\lm}_k,\overline{\overline{\lm}_k}]$.
   First notice that for all $i$ even (corresponding to $w>0$)
   we have:
   \begin{equation*}
    \rho_{i+1,n}-\rho_{i,n}\geq\frac{\pi}{4\sqrt{\underline{\lm}_k}}
   \end{equation*}
   as we can infer from \eqref{eqn:estimate-interval-case-A} or \eqref{eqn:estimate-interval-case-D} and the behaviour of $\Phi_{\lm,h}(h)$
   in \eqref{eqn:limits-Phi-positive}.
   
   Let 
   \begin{equation*}
     h_{i,n}:=\max_{\rho_{i,n}\leq\rho_{i+1,n}}w(\rho)
     \mbox{ for $i$ even}, 
     \qquad
     h_{i,n}:=\min_{\rho_{i,n}\leq\rho_{i+1,n}}w(\rho) 
     \mbox{ for $i$ odd}
   \end{equation*}
   Then for any $i$ even:
   \begin{equation*}
    h_{i,n}\to+\infty
    \Leftrightarrow
    \Phi_{\lm_n,h_{i,n}}(h_{i,n})\to\frac{\pi}{2\sqrt{\hat\lm}}
    \Leftrightarrow
    \dot{w}(\rho_{i,n})\to+\infty
    \Leftrightarrow
    \dot{w}(\rho_{i+1,n})\to-\infty.
   \end{equation*}
   This can be deduced from \eqref{eqn:limits-Phi-positive},
   \eqref{eqn:estimate-derivative-case-A}, and \eqref{eqn:estimate-derivative-case-D}. In the same way, using
   \eqref{eqn:limits-Phi-negative}, \eqref{eqn:estimate-derivative-case-B}, and \eqref{eqn:estimate-derivative-case-C} we get that,
   for $i$ odd:
   \begin{equation*}
    1+\sqrt{\lm_n} h_{i,n}\to0
    \Leftrightarrow
    \Phi_{\lm_n,h_{i,n}}(h_{i,n})\to0
    \Leftrightarrow
    \dot{w}(\rho_{i,n})\to-\infty
    \Leftrightarrow
    \dot{w}(\rho_{i+1,n})\to+\infty.
   \end{equation*}
   Now we prove our claims. Let $\bar i\in\set{0,\dots,k}$ with
   $\bar i$ even (resp. odd) and suppose that $h_{\bar i,n}\to+\infty$ 
   (resp. $1+\sqrt{\lm_n} h_{\bar i,n}\to0$). Then $F_{\lm_n}(h_{\bar i,n})\to+\infty$
   (resp. $F_{\lm_n}(h_{\bar i,n})\to-\infty$)
   and by 
   \eqref{eqn:estimate-derivative-case-A},
   \eqref{eqn:estimate-derivative-case-D}
   (
   \eqref{eqn:estimate-derivative-case-B},
   \eqref{eqn:estimate-derivative-case-C}
   ) we get that:
   \begin{equation*}
    \dot{w}_n(\rho_{\bar i,n})\to+\infty,\ 
    \dot{w}_n(\rho_{\bar i+1,n})\to-\infty
    \quad
    (\dot{w}_n(\rho_{\bar i,n})\to-\infty,\ 
    \dot{w}_n(\rho_{\bar i+1,n})\to+\infty)
   \end{equation*}
   which in turn implies:
   \begin{equation*}
   F_{\lm_n}(h_{\bar i-1,n})\to-\infty\mbox{ ( resp. }+\infty),
   \ 
    F_{\lm_n}(h_{\bar i+1,n})\to-\infty\mbox{ ( resp. }+\infty)
   \end{equation*}   
   (with the obvious exceptions when $\bar i-1<0$ or $\bar i+1>k$).
   So we get:
   \begin{equation*}
    1+\sqrt{\lm_n}h_{\bar i-1,n}\to0\ 
    (h_{\bar i-1,n}\to+\infty),
    \ 
    1+\sqrt{\lm_n}h_{\bar i+1,n}\to0\ 
    (h_{\bar i+1,n}\to+\infty).
   \end{equation*}
   This shows that the property $|F_{\lm}(h_{i,n})|\to+\infty$
   ``propagates'' from the $i$-th interval to the previous and to the next one. From this it is easy to deduce that 
   \thetag{a}--\thetag{d} are all equivalent. To prove that they
   are equivalent to \thetag{e} just use \eqref{eqn:estimate-interval-case-A},\eqref{eqn:estimate-interval-case-B},\eqref{eqn:estimate-interval-case-C},\eqref{eqn:estimate-interval-case-D}, depending on the case, noticing that
   $\rho_{1,n}\geq\dfrac{\pi}{4\pi\underline{\lm}_k}$, as from
   \eqref{eqn:estimate-positive-intervals}
   (this would not be possible if we were considering $\mathcal{S}_k^-$).
   
   Finally suppose that $(w_n,\lm_n)$ verifies any of 
    \thetag{a}--\thetag{e}. Then $\|w_n\|_\infty \to+\infty$. 
    Let $\hat w_n:=\dfrac{w_n}{\|w_n\|_\infty}$. We can suppose that
    $\hat w_n\rightharpoonup\hat w$ in $E$ and that:
    \begin{gather*}
     \rho_{1,n}\to\rho_1,
     \ 
     \rho_{2j-1,n}\to\rho_j,\ \rho_{2j,n}\to\rho_j
     \mbox{  $1\leq j\leq k/2$},
     \ 
     \rho_{k,n}\to R
     \mbox{ if $k$ is odd},
    \end{gather*}
   where $0=\rho_0<\rho_1<\cdots<\rho_h<\rho_h+1=R$ and
   $h=\lfloor k/2\rfloor$ (so $\rho_1=R$ when $k=1$).
   It is not difficult to prove that 
   $\hat w(\rho)>0$ in $]\rho_i,\rho_{i+1}[$ if $i=0,\dots,h$,
   $\hat w(\rho_1)=\cdots=\hat w(\rho_h)=0$,
   ${\hat w}'(0)=0$ and ${\hat w}'(R)=0$ is $k$ is even
   while $\hat w(R)=0$ is $k$ is odd. 
   Moreover for any $i=0,\dots,h$:
   \begin{equation*}
      -(\rho\hat{w}')'=\hat\lm\hat w\quad\mbox{on }]\rho_i,\rho_{i+1}[
   \end{equation*}
   Now we can rearrange  $\hat w$ defining
   $\tilde w:=\sum_{j=0}^h(-1)^j\alpha_j\hat w\uno_{[\rho_j,\rho_{j+1}]}$, where $\alpha_1=1$ and 
   $\alpha_{j}\hat{w}'_-(\rho_j)=\alpha_{j+1}\hat{w}'_+(\rho_j)$,
   $j=1,\dots,h$. In this way $(\hat\lm,\tilde w)$ is an
   eigenvalue -- eigenfunction pair relative for 
   problem \eqref{eqn:radial-eigenfunctions} if $k$ is even
   and of \eqref{eqn:radial-eigenfunctions-zero} if
   $k$ is odd. Since $\tilde w$ has $h=k/2$ nodal points for
   $k$ even and $h+1=(k+1)/2$ if $k$ is odd, then 
   \eqref{eqn:limit-of-the-branch}  holds.
   \end{proof}

  \begin{proof}[Proof of Theorem \ref{thm:global-bifurvation-radial}]
  If $\eps\in]0,1[$ we set:
  \begin{equation*}
   \mathcal{O}_\eps:=\set{(w,\lm)\in E\st \eps<\lm<\eps^{-1},
   1+\sqrt{\lm}w(\rho)>\eps,w(\rho)<\eps^{-1}\ \forall\rho\in[0,R]}
  \end{equation*}
  Clearly $\mathcal{O}_\eps$ is an open set with 
  $\mathcal{O}_\eps\subset\mathcal{W}$. 
  Moreover $(\mu_{k/2},0)\in\mathcal{O}_\eps$ if $\eps$
  is sufficently small. Define $\tilde h_{\lm,\eps}$ as in
  \eqref{eqn:def-tilde-h-1} with $s_0=\eps$ and let 
  $\tilde h_{\lm}(s):=\tilde h_1(\sqrt{\lm}s)$. Using \cite{RabinowitzSturmLiouville70}
  we get there that there exists a pair $(w_\eps,\lm_\eps)$
  in $\partial\mathcal{O}_\eps$,
  with $w_\eps$ having  $k$ nodal points, which
  solves Problem \eqref{eqn:radial-equation} with 
  $\tilde h_{\eps,\lm}:=\tilde h_\eps(\lm,\cdot)$ instead of $h_\lm$. 
  Since $(w,\lm)\in\partial\mathcal{O}_\eps\Rightarrow \tilde h_\eps(w,\lm)=h_\lm(w)$, we get that 
  $(w_\eps,\lm_\eps)\in\mathcal{S}_k^+$.
  For $\eps$ small we have $\eps<\underline{\lm}_k\leq\overline{\lm}_k<\eps^{-1}$ so we get $w_\eps\in\partial\set{1+\sqrt{\lm_\eps}w>\eps,w<\eps^{-1}}$ i.e. there exists a point $\rho_\eps\in[0,R]$ such that 
  \begin{equation*}
  \mbox{either}\quad  1+\sqrt{\lm_\eps}w_\eps(\rho_\eps)=\eps
  \qquad\quad
  \mbox{or}\quad  w_\eps(\rho_\eps)=\eps^{-1}.
  \end{equation*}
  We can find a sequence $\eps_n\to0$ such that the corresponding $(w_n,\lm_n):= (w_{\eps_n},\lm_{\eps_n})$
  verify one of the above properties for all $n\in\nat$.
  If the first one holds for all $n$, then $(w_n,\lm_n)$
  verifies \thetag{b} of Lemma
  \eqref{lma:main-lemma-sequences}; in the second case 
  $(w_n,\lm_n)$ verifies \thetag{a} of Lemma
  \eqref{lma:main-lemma-sequences}. Then by Lemma 
  \eqref{lma:main-lemma-sequences} $\|w_n\|_\infty\to\infty$
  and \eqref{eqn:limit-of-the-branch} holds. 
  This proves the theorem.
  \end{proof}

   \begin{rmk}
    As a consequence of Theorem \eqref{thm:global-bifurvation-radial}
    we get that for any $h\geq1$ integer and any $\lm$ strictly between $\lm_h$ and $\lm_{2h}/2$ there exists $u$ such that $(u,\lm)$ solves Problem \eqref{eqn:singular-neumann-problem}.
    The same is true for all $\lm$ strictly between $\nu_h$ and $\lm_{2h-1}/2$
   \end{rmk}
   
   \begin{rmk}
    The above proof  fails if we follow the bifurcation branch
    $(w_\rho,\lm_\rho)$ with $w_\rho(0)<0$. In this case it seems possible that the branch tends to a point $(\tilde\lm,\tilde w)$
    where $\sqrt{\tilde\lm}\tilde w(0)=-1$ 
    (but $\sqrt{\tilde\lm}\tilde w(0)>-1$ for $\rho>0$). This phenomenon, if true, would be worth studying.
   \end{rmk}
   
   \begin{rmk}\label{eqn:rmk-no-dirichlet-radial-soln}
    The computations of this section show that, if $\Omega$ is the ball, then there are no solutions for the Dirichlet problem. It is
    indeed impossible to construct a (nontrivial) solution $(w,\lm)$ for
    \eqref{eqn:radial-equation} with   $w(R)=0$,
   \end{rmk}


\end{document}